\newtheorem{prop}{Proposition}[section]
\newtheorem{lemma}[prop]{Lemma}
\newtheorem{corollary}[prop]{Corollary}
\newtheorem{theorem}[prop]{Theorem}
\newtheorem{remark}[prop]{Remark}
\newtheorem{ass}[prop]{Assumption}
\newtheorem{example}[prop]{Example}
\begin{document}
\renewcommand{\theenumi}{\rm (\roman{enumi})}
\renewcommand{\labelenumi}{\rm \theenumi}

\allowdisplaybreaks
\renewcommand{\thefootnote}{\fnsymbol{footnote}}
\title{Characterization of the convergence in total variation  and extension of the Fourth Moment Theorem to invariant measures of diffusions}
\author{ Seiichiro Kusuoka$^{1}$\footnote{Supported by JSPS KAKENHI Grant number 25800054.} 
and %
Ciprian A. Tudor$^2$
\vspace*{0.1in} \\
$^{1}$Graduate School of Natural Science and Technology, Okayama University \\
3-1-1 Tsushima-naka, Kita-ku, Okayama 700-8530 Japan\\
kusuoka@okayama-u.ac.jp \vspace*{0.1in} \\
 $^{2}$ Laboratoire Paul Painlev\'e, Universit\'e de Lille 1\\
 F-59655 Villeneuve d'Ascq, France.\\
 \quad tudor@math.univ-lille1.fr\vspace*{0.1in}}
\maketitle

\begin{abstract}
We give necessary and sufficient conditions to characterize the convergence in distribution of a sequence of arbitrary random variables  to a probability distribution which is the invariant measure of a diffusion process. This class of target distributions includes the most known continuous probability distributions. Precisely speaking, we characterize the convergence in total variation to target distributions which are not Gaussian or Gamma distributed, in terms of the Malliavin calculus and of the coefficients of the associated diffusion process.  We also prove that, among the distributions whose associated  squared diffusion coefficient is a polynomial of second degree (with some restrictions on its coefficients), the only possible limits of sequences of multiple integrals are the Gaussian and the Gamma laws. 
\end{abstract}

\vskip0.2cm

{\bf 2010 AMS Classification Numbers:}  60F05, 60H05, 91G70.

 \vskip0.2cm

{\bf Key words:} Fourth Moment Theorem, Stein's method, Malliavin calculus, weak convergence, multiple stochastic integrals, diffusions, invariant measure, convergence in total variation.

\section{Introduction}

In the seminal paper \cite{NuPe}, Nualart and Peccati discovered a surprising central limit theorem (called {\it The Fourth Moment Theorem}) for sequences of multiple stochastic integrals in a Wiener chaos of a fixed order. This result says that the  convergence   in distribution of such a sequence of random variables to the standard normal law is actually equivalent to the  convergence of only the sequence of  their fourth moments. A multidimensional version of this result has been given in \cite{PeTu}. Since the publication of these two pathbreaking papers, many improvements and developments on this theme have been considered. Among them is the work \cite{NuOr}, giving a new proof only based on Malliavin calculus and the use of integration by parts on the Wiener space.

Another pathbreaking paper is the work \cite{NoPe1} by Nourdin and Peccati in which the authors bring together Stein's method with Malliavin calculus and obtain useful estimates for the distance between the law of an arbitrary random variable and the Gaussian distribution in terms of the Malliavin calculus.  It turns out that Stein's method and Malliavin calculus fit together admirably well, and that their interaction has led to some remarkable new results involving central and non-central limit theorems for functionals of infinite-dimensional Gaussian fields. We refer to the recent monographs  \cite{NPbook} for an overview of the existing literature and to \cite{T1} for various applications of the Stein's method and Malliavin calculus to limit theorems and statistics. 

There is also a version of the Fourth Moment Theorem having the Gamma distribution as the target distribution (see \cite{NoPe4}). The convergence of a sequence of multiple stochastic integrals toward a Gamma distribution is characterized by the convergence of the sequences of the  third and fourth moments; alternatively, one can also characterize the convergence to the Gamma law in terms of the Malliavin derivatives.

In the paper \cite{KuTu} we obtained bounds between the distance of an arbitrary random variable and target distributions which are invariant measures of diffusions processes. This class contains the most common continuous probability distributions, including the Gaussian, Gamma, Beta, Pareto, uniform, Student or log-normal distributions, among others. See also \cite{APP}, \cite{EV} for other attempts to extend the theory to more general target distribution via Malliavin calculus. Now, our purpose is to give necessary and sufficient conditions for the convergence of a sequence of random variables (regular enough in the Malliavin sense) to such target distributions. We obtain several results based on the Malliavin derivatives of the sequence and of the diffusion coefficients associated to the target distribution.
Precisely speaking, we prove the equivalence between the convergence in total variation of random variables under a suitable condition and the convergence of a value consisting of the diffusion coefficients and the Malliavin derivatives.
The value is associated to the so-called Stein factor, and the result also characterizes the convergence of the Stein factor.
Several situations when this theory can be applied (meaning that the diffusion coefficients have an explicit expression) are presented. Then we treat the case of the convergence in law of special sequences of random variables that belong to a Wiener chaos of fixed order and we characterize their convergence to target distributions that are different from the Gaussian and Gamma laws. For example, we obtain necessary and sufficient conditions in the case when the limit is a product normal distribution (the product of two independent normal random variables) or  the sum of a normal and an independent random variable.  

We will also focus our analysis on the particular case when the squared diffusion coefficient  is a polynomial of (at most) second degree. Several common probability laws are contained in this class. In this situation the necessary and sufficient conditions for the convergence of a sequence of multiple integrals to the target distribution can be analyzed in details. We actually show that, among the distributions whose associated  squared diffusion coefficient is a polynomial of second degree with some restrictions on its coefficients, the only possible limits of sequences of multiple integrals are the Gaussian and the Gamma laws. In particular, we show that a sequence of multiple Wiener-It\^o integrals cannot converge toward a beta or uniform distribution. We retrieve the  standard Fourth Moment Theorem (Theorem \ref{th:4momO}) and its version for the Gamma law as  particular cases.

We organized our paper as follows.  Section 2 contains some preliminaries on the Malliavin calculus. In Section 3 we recall and extend several results in \cite{KuTu} concerning the characterization of the random variables whose probability distribution is the invariant measure of a diffusion process. In Section 4 we give 
 necessary and sufficient conditions for the convergence of a sequence of random variables (regular enough in the Malliavin sense), while Section 5 and 6 treat the convergence in distribution of sequence of multiple stochastic integrals.

\section{Preliminary: Wiener-Chaos and Malliavin derivatives}
Here we describe the elements from stochastic analysis that we will need in the paper. Consider $H$ a real separable Hilbert space and $(W(h), h \in H)$ an isonormal Gaussian process on a probability space $(\Omega, {\cal{A}}, P)$, which is a centered Gaussian family of random variables such that ${\bf E}\left[ W(\varphi) W(\psi) \right]  = \langle\varphi, \psi\rangle_{H}$. Denote by  $I_{n}$ the multiple stochastic integral with respect to
$B$ (see \cite{N}). This mapping $I_{n}$ is actually an isometry between the Hilbert space $H^{\odot n}$(symmetric tensor product) equipped with the scaled norm $\frac{1}{\sqrt{n!}}\Vert\cdot\Vert_{H^{\otimes n}}$ and the Wiener chaos of order $n$ which is defined as the closed linear span of the random variables $H_{n}(W(h))$ where $h \in H, \|h\|_{H}=1$ and $H_{n}$ is the Hermite polynomial of degree $n \in {\mathbb N}$
\begin{equation*}
H_{n}(x)=\frac{(-1)^{n}}{n!} \exp \left( \frac{x^{2}}{2} \right)
\frac{d^{n}}{dx^{n}}\left( \exp \left( -\frac{x^{2}}{2}\right)
\right), \hskip0.5cm x\in \mathbb{R}.
\end{equation*}
The isometry of multiple integrals can be written as follows: for $m,n$ positive integers,
\begin{eqnarray}
\mathbf{E}\left(I_{n}(f) I_{m}(g) \right) &=& n! \langle \tilde{f},\tilde{g}\rangle _{H^{\otimes n}}\quad \mbox{if } m=n,\nonumber \\
\mathbf{E}\left(I_{n}(f) I_{m}(g) \right) &= & 0\quad \mbox{if } m\not=n.\label{iso}
\end{eqnarray}
It also holds that
\begin{equation*}
I_{n}(f) = I_{n}\big( \tilde{f}\big)
\end{equation*}
where $\tilde{f} $ denotes the symmetrization of $f$ defined by the formula $\tilde{f}%
(x_{1}, \ldots , x_{n}) =\frac{1}{n!} \sum_{\sigma \in {\cal S}_{n}}
f(x_{\sigma (1) }, \ldots , x_{\sigma (n) } ) $.
\\\\
We recall that any square integrable random variable which is measurable with respect to the $\sigma$-algebra generated by $W$ can be expanded into an orthogonal sum of multiple stochastic integrals
\begin{equation}
\label{sum1} F=\sum_{n=0}^\infty I_{n}(f_{n})
\end{equation}
where $f_{n}\in H^{\odot n}$ are (uniquely determined)
symmetric functions and $I_{0}(f_{0})=\mathbf{E}\left[  F\right]$.
\\\\
Let $L$ be the Ornstein-Uhlenbeck operator
\begin{equation*}
LF=-\sum_{n\geq 0} nI_{n}(f_{n})
\end{equation*}
if $F$ is given by (\ref{sum1}) and it is such that $\sum_{n=1}^{\infty} n^{2}n! \Vert f_{n} \Vert ^{2} _{{\cal{H}}^{\otimes n}}<\infty$.
\\\\
For $p>1$ and $\alpha \in \mathbb{R}$ we introduce the Sobolev-Watanabe space $\mathbb{D}^{\alpha ,p }$  as the closure of
the set of polynomial random variables with respect to the norm
\begin{equation*}
\Vert F\Vert _{\alpha , p} =\Vert (I -L) ^{\frac{\alpha }{2}} F \Vert_{L^{p} (\Omega )}
\end{equation*}
where $I$ represents the identity. We denote by $D$  the Malliavin  derivative operator that acts on smooth functions of the form $F=g(W(h_1), \dots , W(h_n))$ ($g$ is a smooth function with compact support and $h_i \in H$)
\begin{equation*}
DF=\sum_{i=1}^{n}\frac{\partial g}{\partial x_{i}}(W(h_1), \ldots , W(h_n)) h_{i}.
\end{equation*}
The operator $D$ is continuous from $\mathbb{D}^{\alpha , p} $ into $\mathbb{D} ^{\alpha -1, p} \left( H\right).$

We will intensively use the product formula for multiple integrals.
It is well-known that for $f\in H^{\odot n}$ and $g\in H^{\odot m}$
\begin{equation}\label{eq:Nualart2}
I_n(f)I_m(g)= \sum _{r=0}^{n\wedge m} r! \left( \begin{array}{c} n\\r\end{array}\right) \left( \begin{array}{c} m\\r\end{array}\right) I_{m+n-2r}(f\otimes _r g)
\end{equation}
where $f\otimes _r g$ means the $r$-contraction of $f$ and $g$ (see e.g. Section 1.1.2 in \cite{N}).


We recall the expression of the third and fourth moment of a random variable in a fixed Wiener chaos. These formulas play an important role in our proofs.
\begin{lemma}\label{8i1}
Let $F= I_{n}(f)$ with $n\in {\mathbb N}$ and $f\in H ^{\odot n}$. Then
\begin{equation}
\label{m3}
{\bf E} [F^3]= \frac{ (n!)^3}{[\left( n/2\right) ! ]^{3}}\langle f, f \tilde{ \otimes } _{n/2} f \rangle 1_{ \{ \text{n is even} \} }
\end{equation}
and
\begin{eqnarray}
{\bf E}[F^4]&=& 3 {\bf E}[F^2] ^{2}+ 3n \sum_{p=1} ^{n-1} (p-1) ! \left( \begin{array}{c} n-1\\ p-1\end{array}\right)^{2} p! \left( \begin{array}{c} n\\ p\end{array}\right) ^{2} (2n-2p)! \Vert f_{m} \tilde{ \otimes }_{p} f_{m} \Vert ^{2}\nonumber\\
&=&3 {\bf E}[I_{n}(f) ^2] ^{2}+ n! ^{2} \sum_{r=1} ^{n-1} (C_{n}^{r}) ^{2} \left[ \Vert f\otimes _{r} f \Vert ^{2} + C_{2n-2r}^{n-r} \Vert f\tilde{\otimes} _{r} f\Vert ^{2} \right] \label{m4}
\end{eqnarray}

\end{lemma}
\begin{proof}
We refer to \cite{NoPe4}, proof of Theorem 1.2 for the first two relations and to \cite{NPbook}, formula (5.2.6) for the last equality.
\end{proof}

The  Fourth Moment Theorem states as follows.

\begin{theorem}{\rm (\cite{NuPe} and  \cite{NuOr})}\label{th:4momO}
Fix $n \in {\mathbb N}$. Consider a sequence $(F_k=I_n(f_k))_{k \in {\mathbb N}}$ of square integrable random variables in the $n$-th Wiener chaos.
Assume that
\begin{equation}
\lim _{k\rightarrow \infty} \mathbf{E}[F_k^2] = \lim _{k\rightarrow \infty} \| f_k\| ^2_{H^{\odot n}} =1.
\end{equation}
Then, the following statements are equivalent.
\begin{enumerate}
\item \label{4momO1} The sequence of random variables $(F_k=I_n(f_k))_{ k\geq 1}$ converges to the standard normal law in distribution as $k\rightarrow \infty$.
\item \label{4momO2} $\lim _{k\rightarrow \infty}\mathbf{E}[F_k^4]=3$.
\item \label{4momO3} $\lim _{k\rightarrow \infty}\| f_k\otimes _l f_k\| _{H^{\otimes 2(n-l)}} =0$ for $l=1,2,\dots ,n-1$.
\item \label{4momO4} $\| DF_k\| _H^2$ converges to $n$ in $L^2(\Omega)$ as $k\rightarrow \infty$.
\end{enumerate}
\end{theorem}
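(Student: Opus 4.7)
The plan is to establish the four equivalences by closing a cycle, most conveniently $\text{(iii)}\Leftrightarrow\text{(iv)}\Rightarrow\text{(i)}\Rightarrow\text{(ii)}\Rightarrow\text{(iii)}$, following the combined Nualart--Peccati / Nualart--Ortiz strategy that uses only the product formula (2.3), the isometry (2.1) and Stein's method.

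For the core equivalence $\text{(iii)}\Leftrightarrow\text{(iv)}$ I would expand $\|DF_k\|^2_H$ using (2.4) together with the product formula. One obtains the chaos decomposition
\begin{equation*}
\|DF_k\|^2_H \;=\; n\cdot n!\,\|f_k\|^2_{H^{\otimes n}} \;+\; n^2 \sum_{r=1}^{n-1} (r-1)!\binom{n-1}{r-1}^{\!2}\, I_{2(n-r)}\!\bigl(f_k\widetilde{\otimes}_r f_k\bigr),
\end{equation*}
so that $\mathbf{E}\|DF_k\|^2_H = n\mathbf{E}[F_k^2] \to n$ and, by orthogonality of distinct chaoses,
\begin{equation*}
\operatorname{Var}\|DF_k\|^2_H \;=\; \sum_{r=1}^{n-1} c_{n,r}\,\bigl\|f_k\widetilde{\otimes}_r f_k\bigr\|^2_{H^{\otimes 2(n-r)}}
\end{equation*}
with explicit $c_{n,r}>0$. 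Since $\|f_k\widetilde{\otimes}_r f_k\|\leq \|f_k\otimes_r f_k\|$, (iii) implies (iv). The converse rests on the (standard) algebraic fact that each un-symmetrized norm $\|f_k\otimes_r f_k\|^2$ is dominated by a finite non-negative combination of symmetrized contractions $\|f_k\widetilde{\otimes}_s f_k\|^2$ for $s=1,\ldots,n-1$, so that vanishing of the symmetrized contractions forces vanishing of all un-symmetrized ones.

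For $\text{(iv)}\Rightarrow\text{(i)}$ I would apply the Stein--Malliavin integration-by-parts identity. Because $F_k$ lies in the $n$-th chaos, $-L^{-1}F_k = F_k/n$, and for any bounded $C^1$ function $\varphi$ with bounded derivative,
\begin{equation*}
\mathbf{E}[F_k\varphi(F_k)]\;=\;\mathbf{E}\bigl[\varphi'(F_k)\,\langle DF_k,-DL^{-1}F_k\rangle_H\bigr]\;=\;\tfrac{1}{n}\,\mathbf{E}\bigl[\varphi'(F_k)\,\|DF_k\|^2_H\bigr].
\end{equation*}
Plugging in the bounded solution $\varphi=\varphi_h$ of Stein's equation $\varphi'(x)-x\varphi(x)=h(x)-\mathbf{E}[h(N)]$ for Lipschitz $h$ yields $|\mathbf{E}[h(F_k)]-\mathbf{E}[h(N)]| \leq \|h\|_{\mathrm{Lip}}\,\mathbf{E}|1-n^{-1}\|DF_k\|^2_H|$, which vanishes under (iv) and gives convergence in law. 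For $\text{(i)}\Rightarrow\text{(ii)}$ I would invoke Nelson's hypercontractivity: on a fixed Wiener chaos all $L^p$ norms are equivalent, so $\sup_k\mathbf{E}|F_k|^{4+\varepsilon}<\infty$; uniform integrability then upgrades weak convergence to convergence of the fourth moment, and since $\mathbf{E}[N^4]=3$ the claim follows.

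Finally, for $\text{(ii)}\Rightarrow\text{(iii)}$ I would square $F_k$ by the product formula, evaluate $\mathbf{E}[F_k^4]$ via the isometry (2.1), and isolate the leading $r=n$ term to arrive at an identity of the form
\begin{equation*}
\mathbf{E}[F_k^4]-3\mathbf{E}[F_k^2]^2 \;=\; \sum_{r=1}^{n-1} \beta_{n,r}\, \bigl\|f_k\otimes_r f_k\bigr\|^2_{H^{\otimes 2(n-r)}}
\end{equation*}
with strictly positive $\beta_{n,r}$, so that $\mathbf{E}[F_k^4]\to 3$ together with $\mathbf{E}[F_k^2]\to 1$ forces every contraction norm to vanish. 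The main obstacle I expect is the bookkeeping in the two chaos expansions above, and in particular verifying that both $\mathbf{E}[F_k^4]-3\mathbf{E}[F_k^2]^2$ and $\operatorname{Var}\|DF_k\|^2_H$ can be written as \emph{non-negative} linear combinations of contraction norms: it is this positivity, rather than any analytic estimate, that allows one to extract individual convergence of each $\|f_k\otimes_r f_k\|$ from a single scalar quantity, and without it the four equivalences would collapse.
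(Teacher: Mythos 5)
The paper offers no proof of Theorem \ref{th:4momO} for you to be measured against: the result is imported verbatim from \cite{NuPe} and \cite{NuOr}. Your proposal reconstructs the standard argument of those references (in the streamlined form of \cite{NoPe1} and \cite{NPbook}): the chaos expansion of $\| DF_k\| _H^2$ obtained from (\ref{eq:DFDG2}) and the product formula (\ref{eq:Nualart2}), the resulting variance identity for \ref{4momO3}$\Rightarrow$\ref{4momO4}, the Stein--Malliavin integration by parts for \ref{4momO4}$\Rightarrow$\ref{4momO1}, hypercontractivity and uniform integrability for \ref{4momO1}$\Rightarrow$\ref{4momO2}, and the fourth-moment identity for \ref{4momO2}$\Rightarrow$\ref{4momO3}. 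This cycle is correct and, by itself, complete.

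The one step I would not let stand is the claimed ``standard algebraic fact'' that each un-symmetrized norm $\| f_k\otimes _r f_k\| ^2$ is dominated by a non-negative combination of the symmetrized norms $\| f_k\widetilde{\otimes }_s f_k\| ^2$, $s=1,\dots ,n-1$. This is not a known identity, and I do not believe it holds term by term for intermediate $r$: expanding $\| f\widetilde{\otimes }_r f\| ^2$ over permutations produces inner products of distinct contractions that are not sign-definite, so the relation cannot be inverted. (It does hold for $r=1$ and $r=n-1$, since $f\otimes _{n-1}f$ is automatically symmetric and $\| f\otimes _1 f\| =\| f\otimes _{n-1}f\|$ by the contraction duality.) Fortunately the step is redundant: \ref{4momO4}$\Rightarrow$\ref{4momO3} is recovered by travelling around your own cycle \ref{4momO4}$\Rightarrow$\ref{4momO1}$\Rightarrow$\ref{4momO2}$\Rightarrow$\ref{4momO3}, so you should simply delete the claim rather than try to prove it. A second, harmless, imprecision: the identity behind \ref{4momO2}$\Rightarrow$\ref{4momO3} is really
\begin{equation*}
\mathbf{E}[F_k^4]-3(n!)^2\| f_k\| ^4=\sum _{r=1}^{n-1}\left[ (n!)^2\binom{n}{r}^2\| f_k\otimes _r f_k\| ^2+(r!)^2\binom{n}{r}^4(2n-2r)!\, \| f_k\widetilde{\otimes }_r f_k\| ^2\right] ,
\end{equation*}
in which both families of contractions appear with positive coefficients; since this exceeds the right-hand side you wrote, the conclusion you draw from it is unaffected.
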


We consider the general version of this theorem below.

\section{General versions of Stein's method and Stein's bound}\label{sec:review}

In order to discuss the general version of the Fourth Moment Theorem, we review Stein's method and Stein's bound obtained in \cite{KuTu} with small extension. Let us briefly recall the context in \cite{KuTu}. 
Let $S$ be the interval $(l,u) $ ($-\infty \leq l<u\leq \infty$) and $\mu$ be a probability measure on $S$ with a density function $p$ which is continuous,  strictly positive on $S$, and admits finite variance.
Consider a continuous function $b$ on $S$ such that there exists $k \in (l,u)$ such that $b(x)>0$  for $x \in (l,k)$ and $b(x)<0$ for $x\in (k,u)$, $b \in L^1(\mu)$, $bp$ is bounded on $S$ and
\[
\int_{l}^{u} b(x)p(x)dx =0.
\]
Define
\begin{equation}\label{n1}
a(x):= \frac{2 \int_{l}^{x} b(y) p(y) dy}{p(x)},\quad x\in S.
\end{equation}
Then, the stochastic differential equation:
\[
dX_{t}= b(X_{t}) dt + \sqrt{a(X_{t}) } dW_{t}, \hskip0.5cm t\geq 0
\]
has a unique Markovian weak solution, ergodic with invariant density $p$. See Theorem 2.4 in \cite{BSS}.

Based on this fact, it is possible to define a so-called Stein's equation for a given function $f\in L^{1}(\mu)$. 
In Section 3 of \cite{KuTu}, we have considered only the case that $f\in C_0(S)$ (the set of continuous functions on $S$ vanishing at the boundary of $S$). However, it is easy to see that the argument is valid  even if $f \in L^1(\mu )$, as follows.

For $f \in L^1(\mu )$, let $m_f:= \int_{l}^{u} f(x) \mu (dx)$ and define $\tilde g_f$ by, for every $x\in S$,
\begin{equation}\label{eq:defgf}
\tilde g_f(x):= \frac{2}{a(x)p(x)}\int _l ^x ( f(y)-m_f)p(y) dy.
\end{equation}
Then, by Proposition 1 in Section 3.2 of \cite{KuTu} we have
\[
\tilde g_f(x) = \int _l ^x \frac{2(f(y)-m_f)}{a(y)}\exp \left( -\int _y^x \frac{2b(z)}{a(z)}dz\right) dy, \hskip0.5cm x\in S.
\]
The function $g_f(x):=\int _0^x \tilde g_f(y)dy$ satisfies that $f-m_f=Ag_f$ ($A$ is the generator of the diffusion $(X_{t}) _{t\geq 0}$) $\mu$-almost everywhere, and
\begin{equation}\label{eq:Stein-general}
f(x) - \mathbf{E}[f(X)] = \frac12 a(x)\tilde g_f'(x) + b(x)\tilde g_f(x), \quad \mu \mbox{-a.e.}\ x
\end{equation}
where $X$ is a random variable with its law $\mu$.
The equation (\ref{eq:Stein-general}) is a generalized version of Stein's equation.

\begin{remark}\rm
\begin{enumerate}
\item If $f\in L^1(\mu) \cap C(S)$ (where $C(S)$ denotes the class of continuous functions on $S$), (\ref{eq:Stein-general}) holds for all $x \in S$.
\item Since $\mu$ has the density function $p$, (\ref{eq:Stein-general}) follows almost everywhere with respect to the Lebesgue measure.
\end{enumerate}
\end{remark}

Similarly to the original Stein's equation, (\ref{eq:Stein-general}) characterizes the distribution of $X$ as follows. This result will play a crucial role in the proofs of the main results in the next sections. 

\begin{theorem}\label{char2}
Assume that $\int _S a(x)\mu (dx) <\infty$.
Let $Y$ be a $S$-valued random variable such that $\mathbf{E}[|b(Y)|] <\infty$.
Then, the distribution of $Y$ coincides with $\mu$ if and only if
\begin{equation}\label{eq:char2}
\mathbf{E}\left[ \frac12 a(Y)h'(Y) + b(Y)h(Y)\right] =0
\end{equation}
for every $h\in C^1(S)$ such that $\mathbf{E}[|b(Y)h(Y)|] <\infty$ and $\mathbf{E}[| a(Y)h'(Y)|] <\infty$.
\end{theorem}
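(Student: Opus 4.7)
The proof splits into two implications.

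For necessity, suppose $Y$ has law $\mu$. Differentiating the defining relation (\ref{n1}) yields the key identity $\tfrac{1}{2}(ap)'(x) = b(x)p(x)$ on $S$, while the definition of $a$ together with the hypothesis $\int_l^u b p\, dx = 0$ forces the boundary values $\lim_{x\downarrow l} a(x)p(x) = 0$ and $\lim_{x\uparrow u} a(x)p(x) = 0$. Then
\begin{equation*}
\mathbf{E}\bigl[\tfrac{1}{2} a(Y) h'(Y) + b(Y) h(Y)\bigr] = \int_l^u \bigl[\tfrac{1}{2}a(x)p(x)h'(x) + b(x) p(x) h(x)\bigr]\, dx,
\end{equation*}
and integration by parts on the first term turns the right hand side into the boundary increment of $\varphi(x) := \tfrac{1}{2}a(x)p(x)h(x)$, the volume term cancelling against $\int b p h\,dx$. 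The integrability assumptions $\mathbf{E}[|b(Y)h(Y)|]<\infty$ and $\mathbf{E}[|a(Y)h'(Y)|]<\infty$ make $\varphi'$ absolutely integrable on $(l,u)$, and combined with $ap\to 0$ at both boundaries they force $\varphi(l^+) = \varphi(u^-) = 0$, yielding (\ref{eq:char2}).

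For sufficiency, assume (\ref{eq:char2}) holds for every admissible $h$; we recover $\mathrm{Law}(Y) = \mu$ by exhibiting enough admissible test functions. Pick $f$ in a measure-determining class of smooth functions with compact support inside $(l,u)$, write $m_f = \int f\,d\mu$, and set $h := \tilde g_f$ as in (\ref{eq:defgf}). The discussion preceding the theorem gives $h \in C^1(S)$ together with the pointwise Stein identity $\tfrac{1}{2}a(x)h'(x) + b(x)h(x) = f(x) - m_f$ on $S$. Once one checks that the compactness of $\mathrm{supp}(f)$ yields boundedness of $h$ and of $a h'$ on all of $S$, the integrability conditions on $h$ in the statement hold \emph{regardless} of the law of $Y$, and applying (\ref{eq:char2}) to this specific $h$ gives
\begin{equation*}
\mathbf{E}[f(Y)] - m_f = \mathbf{E}\bigl[\tfrac{1}{2}a(Y)h'(Y) + b(Y)h(Y)\bigr] = 0.
\end{equation*}
Since the test class determines probability measures on $S$, this forces $\mathrm{Law}(Y) = \mu$.

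The main technical obstacle is the boundedness claim for $\tilde g_f$ and $a \tilde g_f'$ near the endpoints $l$ and $u$. This requires the exponential representation
\begin{equation*}
\tilde g_f(x) = \int_l^x \frac{2(f(y)-m_f)}{a(y)}\exp\!\left(-\int_y^x \frac{2b(z)}{a(z)}\,dz\right)dy,
\end{equation*}
the hypothesis $\int_S a\,d\mu < \infty$, and the sign condition on $b$, to show that the two one-sided integrals defining $\tilde g_f(x)$ through the identity $\int_l^u (f-m_f)p\,dy = 0$ decay appropriately as $x$ approaches a boundary, making $a(x)p(x)^{-1}\int_l^x (f-m_f)p\,dy$ bounded there; the bound on $a\tilde g_f'$ then follows directly from the Stein equation itself. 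Everything else in the argument is routine integration by parts together with density of $C_c^{\infty}(S)$ in the space of bounded measurable functions in the weak sense.
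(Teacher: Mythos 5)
Your argument is correct, and the two directions compare differently with the paper's own proof. For sufficiency you follow essentially the same route as the paper: take $f$ with compact support, set $h=\tilde g_f$ from (\ref{eq:defgf}), use the sign condition on $b$ to bound $\tilde g_f$ near the endpoints, get integrability of $a\tilde g_f'$ from the Stein equation (\ref{eq:Stein-general}), and conclude $\mathbf{E}[f(Y)]=m_f$ on a measure-determining class. For necessity you take a genuinely different and more elementary route: the paper starts from a \emph{bounded} $h$, manufactures $f=\tfrac{1}{2p}(aph)'$, identifies $h=\tilde g_f$, invokes (\ref{eq:Stein-general}), and then disposes of unbounded $h$ by an unwritten approximation step; you instead integrate by parts directly, using $(ap)'=2bp$ from (\ref{n1}) to reduce $\mathbf{E}[\tfrac12 a(Y)h'(Y)+b(Y)h(Y)]$ to the boundary increment of $\varphi=\tfrac12 aph$. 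This treats bounded and unbounded $h$ uniformly and avoids the approximation step, which is a genuine simplification. The one thin spot is your justification that $\varphi(l^+)=\varphi(u^-)=0$: the limits exist because $\varphi'=\tfrac12 aph'+bph$ is integrable, but the fact that $ap\to 0$ at the endpoints does not by itself kill them, since $h$ may be unbounded. You need one more line: if the limit at, say, $u$ were some $c\neq 0$, then $h(x)\sim 2c/(a(x)p(x))$ near $u$, so $|b(x)h(x)|p(x)\sim 2|c|\,|b(x)|/a(x)=|c|\,|(ap)'(x)|/(a(x)p(x))$, whose integral up to $u$ diverges logarithmically precisely because $ap\to 0$ there; this contradicts $\mathbf{E}[|b(Y)h(Y)|]<\infty$. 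With that observation made explicit, your proof is complete.
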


\begin{proof}
Assume that the distribution of $Y$ is $\mu$.
Let $h\in C_b^1(S)$ such that $\mathbf{E}[|b(Y)h(Y)|] <\infty$ and $\mathbf{E}[| a(Y)h'(Y)|] <\infty$, and
\[
f(x):= \frac 1{2p(x)}\frac d{dx}[a(x)p(x)h(x)], \quad x\in S.
\]
Here, note that $ap\in C^1(S)$ follows by the definition of $a$.
Since
\begin{align*}
\frac d{dx} [a(x)p(x)h(x)] &= [a(x)p(x)]' h(x) + a(x)p(x)h'(x)\\
&= 2b(x)p(x)h(x) + a(x)p(x)h'(x),
\end{align*}
we have
\begin{align*}
\int _S |f(x)| \mu (dx)
& = \frac 12 \int _S \left| \frac d{dx}[a(x)p(x)h(x)] \right| dx\\
& \leq \int _S |b(x)h(x)|p(x)dx + \frac 12 \int _S a(x)p(x)|h'(x)|dx \\
& = \mathbf{E}[|b(X)h(X)|] + \frac 12 \mathbf{E}[a(X)|h'(X)|].
\end{align*}
Hence, $f\in L^1(\mu )$.
Define $\tilde g_f$ as in (\ref{eq:defgf}).
Then, we have $h=\tilde g_f$ by explicit calculation.
Therefore, in view of (\ref{eq:Stein-general}), we obtain (\ref{eq:char2})  in the case that $h$ is bounded.
The case that $h$ is not bounded is obtained by approximation.

Next we show that the distribution of $Y$ is $\mu$ if (\ref{eq:char2}) holds.
Let $f\in C_K(S)$, where $C_K(S)$ is the total set of continuous functions on $S$ with compact support.
Since $f(x)=0$ for $x$ sufficiently near to $u$, there exists $u' \in (l,u)$ such that
\[
\int _l^x [f(y)-m_f]p(y)dy= m_f \left( 1- \int _l ^x p(y) dy \right) = m_f \int _x^u p(y)dy ,\quad x\in [u',u)
\]
where $m_f= \int _l^u f(x)p(x)dx$.
On the other hand, for sufficiently small $\varepsilon >0$ the assumption on $b$ implies that there exists $u''\in (l,u)$ such that
\[
b(x) < -\varepsilon \ \mbox{and} \ \int _x^u b(y)p(y) dy < 0
\]
for $x \in [u'',u)$.
Let $\tilde u := \max\{u',u''\}$.
Hence, by (\ref{n1}) we have for $x\in [ \tilde u, u)$
\begin{align*}
|\tilde g_f (x)| & = \frac{2}{a(x)p(x)}\left| \int _l ^x ( f(y)-m_f)p(y) dy \right|\\
& = \frac{|m_f| \int _x ^u p(y) dy }{\left| \int _l^x b(y)p(y)dy \right|} = \frac{|m_f| \int _x^u p(y) dy}{\left| \int _x^u b(y)p(y)dy \right| }\\
&\leq \frac{|m_f|}{\varepsilon}.
\end{align*}
Similarly, there exists $\tilde l \in (l,u)$ such that
\[
|\tilde g_f (x)| \leq \frac{|m_f|}{\varepsilon} ,\quad x\in (l,\tilde l] .
\]
These estimates and the continuity and positivity of $a(x)p(x)$ in $S$ imply that the function $\tilde g_f$ defined by (\ref{eq:defgf}) is bounded and satisfies $\mathbf{E}[|b(Y)\tilde g_f(Y)|] <\infty$.
The finiteness of $\mathbf{E}[| a(Y)\tilde g_f' (Y)|]$ is obtained from (\ref{eq:Stein-general}).
By (\ref{eq:Stein-general}) and (\ref{eq:char2}) we have
\[
\mathbf{E}[f(Y)]-\mathbf{E}[f(X)] = \mathbf{E}\left[ \frac12 a(Y)\tilde g_f'(Y) + b(Y)\tilde g_f(Y)\right] =0.
\]
Since this equality holds for any $f\in C_K(S)$, $X$ and $Y$ have the same distribution.
\end{proof}

An alternative characterization of the random variables $Y$ with distribution $\mu$ has been Theorem 2 in Section 3.2 of \cite{KuTu}. It involves operators from Malliavin calculus and a conditional expectation given the $\sigma$-field generated by $Y$. The same conditional expectation will appear in the statement of the main result in the next section.

\begin{theorem}\label{char}
Consider a random variable $Y \in \mathbb{D} ^{1,2}$ with its values on $S$ which satisfies that $b(Y)\in L^2(\Omega)$. Then, $Y$ has probability distribution $\mu$ if and only if $\mathbf{E}[b(Y)]=0$ and
\[
\mathbf{E}\left[ \left. \frac12 a(Y) + \langle D(-L)^{-1} b(Y), DY \rangle _H \right| Y\right] =0.
\]
\end{theorem}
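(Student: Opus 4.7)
The plan is to reduce Theorem \ref{char} to Theorem \ref{char2} by means of the standard Malliavin integration by parts formula. Recall that for any centered $F\in L^2(\Omega)$ and any sufficiently regular function $\phi\colon \mathbb{R}\to \mathbb{R}$ with $\phi(Y)\in \mathbb{D}^{1,2}$ one has
\[
\mathbf{E}[F\phi(Y)] = \mathbf{E}\bigl[\phi'(Y)\langle DY, D(-L)^{-1}F\rangle_H\bigr].
\]
I would apply this identity to $F=b(Y)$, which belongs to $L^2(\Omega)$ by hypothesis, and which must be centered precisely because the theorem includes the extra condition $\mathbf{E}[b(Y)]=0$.

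For the ``only if'' direction, assume $Y$ has law $\mu$. Taking $h\equiv 1$ in Theorem \ref{char2} (or equivalently using $\int_l^u b\,p\,dx=0$) yields $\mathbf{E}[b(Y)]=0$. For a test function $h\in C^1(S)$ satisfying the integrability conditions of Theorem \ref{char2}, the integration by parts identity above with $F=b(Y)$ and $\phi=h$ gives
\[
\mathbf{E}[b(Y)h(Y)] = \mathbf{E}\bigl[h'(Y)\langle DY, D(-L)^{-1}b(Y)\rangle_H\bigr].
\]
Substituting this into (\ref{eq:char2}) and combining the two terms produces
\[
\mathbf{E}\!\left[h'(Y)\left(\tfrac12 a(Y)+\langle D(-L)^{-1}b(Y), DY\rangle_H\right)\right]=0.
\]
Conditioning on $Y$ inside the expectation then brings out the expression whose vanishing is claimed. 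Letting $h'$ range over a rich class of bounded continuous functions (constructing $h$ as an antiderivative and checking it meets the integrability hypothesis of Theorem \ref{char2}) forces the conditional expectation to equal zero $P$-almost surely.

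For the converse, assume $\mathbf{E}[b(Y)]=0$ and the conditional expectation identity. Multiplying the $\sigma(Y)$-measurable quantity inside the conditional expectation by $h'(Y)$ and taking expectations yields the displayed equation above. Using the integration by parts identity in reverse on the $\langle D(-L)^{-1}b(Y), DY\rangle_H$ term (legitimate because $b(Y)$ is centered) converts this back into
\[
\mathbf{E}\!\left[\tfrac12 a(Y)h'(Y)+b(Y)h(Y)\right]=0
\]
for every admissible $h$, and Theorem \ref{char2} then concludes that $Y\sim\mu$.

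The main technical obstacle is checking, for a sufficiently rich class of test functions $h$, that $h(Y)\in\mathbb{D}^{1,2}$ so that the Malliavin integration by parts formula is rigorously applicable, while simultaneously meeting the integrability requirements $\mathbf{E}[|b(Y)h(Y)|]<\infty$ and $\mathbf{E}[|a(Y)h'(Y)|]<\infty$ demanded by Theorem \ref{char2}. The natural remedy is to prove both implications first for $h\in C_b^1(S)$, where the integration by parts formula poses no difficulty, and then extend by truncation/approximation to obtain the class of test functions needed for the characterization. One must also verify that the class of functions $h'(Y)$, as $h$ ranges over these test functions, is rich enough in $L^2(\sigma(Y))$ to conclude the $P$-a.s.\ vanishing of the conditional expectation in the ``only if'' part.
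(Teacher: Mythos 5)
Your argument is correct and is essentially the standard proof of this result: the paper itself states Theorem \ref{char} without proof, deferring to Theorem 2 in Section 3.2 of \cite{KuTu}, and the proof there rests on exactly the same two ingredients you use, namely the integration-by-parts identity $\mathbf{E}[b(Y)h(Y)]=\mathbf{E}\bigl[h'(Y)\langle DY, D(-L)^{-1}b(Y)\rangle_H\bigr]$ for the centered random variable $b(Y)$, combined with the Stein-type characterization of Theorem \ref{char2}. The technical caveats you flag (working first with $h\in C_b^1(S)$ so that $h(Y)\in\mathbb{D}^{1,2}$, and verifying that the class $\{h'(Y)\}$ is separating in $L^1(\sigma(Y))$ so the conditional expectation vanishes a.s.) are genuine but routine, and are resolved as you propose.
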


Stein's bounds below for the distance between the law of an arbitrary random variable and the measure $\mu$ are based on the Stein equation (\ref{eq:Stein-general}) and the bounds of $\tilde g _f$ and $\tilde g_f'$ are obtained in Section 3.2 of \cite{KuTu}.
Now we state the result in \cite{KuTu} for later use.

\begin{ass}\label{ass}
\begin{enumerate}
\item
If $u<\infty$, assume that there exists $u' \in (l,u)$ such that $b$ is non-decreasing and Lipschitz continuous on $[u',u)$ and $\liminf_{x\rightarrow u}a(x)/(u-x) >0$.
If $u=\infty$, assume that there exists $u' \in (l,u)$ such that $b$ is non-decreasing on $[u',u)$ and $\liminf_{x\rightarrow u}a(x)>0$.
\item If $l>-\infty$, assume that there exists $l' \in (l,u)$ such that $b$ is non-increasing and Lipschitz continuous on $(l,l']$ and $\liminf_{x\rightarrow l}a(x)/(x-l) >0$.
If $l=-\infty$, assume that there exists $l' \in (l,u)$ such that $b$ is non-decreasing on $(l,l']$ and $\liminf_{x\rightarrow l}a(x)>0$.
\end{enumerate}
\end{ass}

\begin{theorem}\label{th:bound}
\begin{enumerate}

\item 
Let $d$ be the Fortet-Mourier distance.
Assume that there exist $l', u' \in (l,u)$ such that $b$ is non-increasing on $(l,l')$ and $(u',u)$.
Then we have for $S$-valued random variable $Y\in {\mathbb D}^{1,2}$
\begin{align*}
d({\mathcal L}(Y), \mu) & \leq C \mathbf{E}\left[ \left. \mathbf{E}\left[ \left| \frac12 a(Y) + \langle D(-L)^{-1}\left\{ b(Y)-\mathbf{E}[b(Y)]\right\} , DY \rangle _H \right| Y\right] \right| \right] \\
&\quad + C|\mathbf{E}\left[ b(Y)\right] |, \quad 
\end{align*}
where $C$ is a positive constant independent of $Y$ and ${\mathcal L}(Y)$ is the law of $Y$.

\item 
Let $d$ be the Kolmogorov distance or the total variation distance.
Under Assumption \ref{ass}, we have for $S$-valued random variable $Y\in {\mathbb D}^{1,2}$
\begin{align*}
d({\mathcal L}(Y), \mu)  &\leq C \mathbf{E}\left[ \left| \mathbf{E}\left[ \left.\frac12 a(Y) + \langle D(-L)^{-1}\left\{ b(Y)-\mathbf{E}[b(Y)]\right\} , DY \rangle _H \right| Y \right] \right| \right]\\
&\quad + C|\mathbf{E}\left[ b(Y)\right] |,
\end{align*}
where $C$ is a positive constant independent of $Y$ and ${\mathcal L}(Y)$ is the law of $Y$.
\end{enumerate}
\end{theorem}

We remark that Assumption \ref{ass} is designed for the estimate of the Kolmogorov distance or the total variation distance, while the estimate of the Fortet-Mourier distance is obtained under a simple assumption.
This difference of the assumptions comes from the estimate of $\tilde g_f'$.
We also remark that the cases that $\mu$ has the normal distribution and $\mu$ has the Gamma distribution satisfy the Assumption \ref{ass} under suitable choices of $a$ and $b$.
See Section 3.2 of \cite{KuTu} for the details.

The Fourth Moment Theorem tells us that the Stein's bound is sharp for multiple integrals in the case of Gaussian law, meaning that  a sequence of multiple stochastic integrals convergences to a Gaussian distribution if and only if the right-hand side of the Stein's bound vanishes. The purpose of the last three sections of our paper is to give a necessary and sufficient condition for  sequences of random variables to converge to a measure $\mu$ as described above.

\section{Necessary and sufficient conditions for the convergence to the invariant measure of a diffusion}\label{sec:NScond}

The purpose of this section is to provide necessary and sufficient conditions for the convergence of the sequence $ (F_{m}) _{m \in {\mathbb N}}$ to the invariant measure of a diffusion $\mu$ (as described in Section 3). 
We aim to give such a characterization in terms of the squared diffusion coefficient $a$ and of the Malliavin derivatives of $F_{m}$.  The main result in \cite{KuTu} (see also Section 3) implies that a sufficient condition for the convergence in distribution of $(F_{m}) _{m\in {\mathbb N}}$ in a certain class to $\mu $ is that ${\bf E}\left[ \left. \frac{1}{2} a(F_{m})- \langle DF_{m}, D(-L) ^{-1} \{ F_{m} -b(F_m)\} \rangle \right| F_m \right] $ converges in $L^1(\Omega)$ to zero as $m\to \infty$ and ${\bf E}[b(F_m)]$ converges to zero as $m\rightarrow \infty$.  But it will follow from the results presented below in this section that this condition is sometimes too strong and it is not a necessary condition for the convergence of $(F_{m})_{m\geq 1}$ in distribution to $\mu$.  Actually, we will consider the convergence in total variation which is strictly stronger than the convergence in distribution. Recall that the total variation distance between the law of two random variables $X$ and $Y$ is defined by
\begin{equation*}
d_{TV} ({\mathcal L}(X) ,{\mathcal L}(Y)) = \sup _{A} \left| P(X\in A)- P(Y\in A) \right|
\end{equation*}
where the supremum is taken over all Borel sets $A\subset \mathbb{R}$.  We also have (see e.g. Appendix C in \cite{NPbook})
\begin{equation*}
d_{TV} ({\mathcal L}(X) ,{\mathcal L}(Y)) = \frac{1}{2}\sup _{h}\left| \mathbf{E}\left[ h(X) \right] -\mathbf{E}\left[ h(Y) \right] \right|
\end{equation*}
where the supremum is considered over all Borel measurable functions $h$ with $\Vert h\Vert _{\infty} \leq 1$.
Let us start with the following result which is connected with Theorem \ref{char}.

Set $S=(l,u)$, $\mu$, $X$, $p$, $b$, $a$ and $k$ as in Section \ref{sec:review}.
Recall that $\int _S |b(x)|\mu (dx) <\infty$.
We assume that $\int _S a(x)\mu (dx) < \infty$.
Denote the Lebesgue measure on ${\mathbb R}$ by $dx$.
Additionally we consider the function $\phi$ on $S$ given by
\[
\phi (x):= \frac 12 a(x) + (|k|+|x|)|b(x)|
\]
where $k$ is an element in $S$ which appeared in the assumption of $b$.

\begin{theorem}\label{thm:equi}
Let $Y$ be an $S$-valued random variable in ${\mathbb D}^{1,2}$ and assume that the distribution of $Y$ is absolutely continuous with respect to the Lebesgue measure.
Then, for every $p>1$ we have
\begin{align*}
&{\bf E} \left[ \left| {\bf E} \left[ \left. \frac12 a(Y) + \langle D(-L)^{-1}\left\{ b(Y)-\mathbf{E}[b(Y)]\right\} , DY \rangle _H \right| Y \right] \right|  \right] \\
&\leq  \left\{ 1+ (1 + {\bf E} \left[ |b(Y)|^p\right] + {\bf E} \left[ |b(X)|^p\right]) {\bf E} \left[ |Y -k|\right] +  {\bf E} \left[ \phi (Y)^p \right] + {\bf E} \left[ \phi (X)^p \right] \right\} \\
&\quad \times d_{\rm TV}({\mathcal L}(Y), \mu) ^{1-1/p}.
\end{align*}
\end{theorem}

\begin{proof}
First we show that for $M>0$
\begin{equation}\label{eq:thmequi00} \begin{array}{l}
\displaystyle {\bf E} \left[ \left| {\bf E} \left[ \left. \frac12 a(Y) + \langle D(-L)^{-1}\left\{ b(Y)-\mathbf{E}[b(Y)]\right\} , DY \rangle _H \right| Y \right] \right|  \right] \\
\displaystyle \leq  (M+ {\bf E} \left[ |Y -k|\right]) d_{\rm TV}({\mathcal L}(Y), \mu) \\
\displaystyle \quad + {\bf E} \left[ \phi (Y); \phi (Y) > M \right] +  {\bf E} \left[ \phi (X); \phi (X) > M\right] \\
\displaystyle \quad + {\bf E} \left[ |Y -k|\right] ( {\bf E}\left[ |b(Y)| ; |b(Y)|> M \right] + {\bf E}\left[| b(X)| ; |b(X)| > M\right] ).
\end{array} \end{equation}

Let $h \in L^\infty (S,dx)$ (the space of essentially bounded functions on $S$ with respect to $dx$).
For $x \in S$ define 
\[
f_h (x):= \frac 12 a(x)h(x) + b(x) \int _k^x h(y) dy .
\]
We remark that the existence of the density function of $X$ implies $|h(X)| \leq \| h\| _\infty$ almost surely.
Since ${\bf E}[|a(X)h(X)|]<\infty$ and 
\begin{align*}
{\bf E}\left[ \left| b(X) \int _k^X h(y) dy\right| \right] &\leq \| h\| _\infty {\bf E}\left[ \left| b(X) X\right| \right] + |k| \| h\| _\infty {\bf E}\left[ \left| b(X) \right| \right] \\
&<\infty ,
\end{align*}
Theorem \ref{char2} implies $E[f_h(X)]=0$.
Hence, by integration by parts formula, we have
\begin{align*}
&{\bf E} \left[ f_h(Y)\right] -  {\bf E} \left[ f_h(X)\right] \\
&= {\bf E} \left[  \frac12 a(Y)h(Y) + b(Y)\int _k^Y h(y)dy \right]\\
&= {\bf E} \left[  \frac12 a(Y)h(Y) + (b(Y)-{\bf E}[b(Y)])\int _k^Y h(y)dy \right] + {\bf E}[b(Y)] {\bf E} \left[  \int _k^Y h(y)dy \right]\\
&= {\bf E} \left[  \frac12 a(Y)h(Y) + \left[ \delta D (-L)^{-1} (b(Y)-{\bf E}[b(Y)]) \right] \int _k^Y h(y)dy \right] \\
&\quad + {\bf E}[b(Y)] {\bf E} \left[  \int _k^Y h(y)dy \right]\\
&= {\bf E} \left[  \frac12 a(Y)h(Y) + \left\langle D (-L)^{-1} (b(Y)-{\bf E}[b(Y)]), D\int _k^Y h(y)dy \right\rangle _H \right] \\
&\quad + {\bf E}[b(Y)] {\bf E} \left[  \int _k^Y h(y)dy \right]\\
&= {\bf E} \left[ h(Y) \left( \frac12 a(Y) + \left\langle D (-L)^{-1} (b(Y)-{\bf E}[b(Y)]), DY \right\rangle _H \right) \right] \\
&\quad + {\bf E}[b(Y)] {\bf E} \left[  \int _k^Y h(y)dy \right].
\end{align*}
Since the duality between the $L^1(S,dx)$ and the $L^\infty(S,dx)$ yields
\begin{align*}
& {\bf E} \left[ \left| {\bf E} \left[ \left. \frac12 a(Y) + \langle D(-L)^{-1}\left\{ b(Y)-\mathbf{E}[b(Y)]\right\} , DY \rangle _H \right| Y \right] \right|  \right] \\
&=\sup_{\| h\| _\infty \leq 1}\left| {\bf E} \left[ \left( \frac12 a(Y) + \langle D(-L)^{-1}\left\{ b(Y)-\mathbf{E}[b(Y)]\right\} , DY \rangle _H \right) h(Y) \right] \right| ,
\end{align*}
we obtain
\begin{equation}\label{eq:thmequi01}
\begin{array}{l}
\displaystyle {\bf E} \left[ \left| {\bf E} \left[ \left. \frac12 a(Y) + \langle D(-L)^{-1}\left\{ b(Y)-\mathbf{E}[b(Y)]\right\} , DY \rangle _H \right| Y \right] \right|  \right]  \\[5mm]
\displaystyle \leq \sup_{\| h\| _\infty \leq 1} \left| {\bf E} \left[ f_h (Y)\right] -  {\bf E} \left[ f_h (X)\right] \right| +  \left| \mathbf{E}\left[ b(Y)\right] \right| {\bf E} \left[ |Y -k|\right] .
\end{array}
\end{equation}
Now we consider the estimate of $f_h$.
For $x\in S$, it holds that
\begin{align*}
|f_h(x)|
& \leq \left| b(x)\int _k^x h(y) dy \right|  + \frac 12 a(x)|h(x)| \\
& \leq \| h\| _\infty  (|k|+|x|)|b(x)| + \frac 12 a(x)|h(x)| .
\end{align*}
Since the existence of the density function of $X$ and $Y$ implies $|h(X)| \leq \| h\| _\infty$ and $|h(Y)| \leq \| h\| _\infty$ almost surely, we have
\begin{align*}
& |f_h(X)| \leq \| h\| _\infty \left\{ (|k|+|X|)|b(X)| + \frac 12 a(X) \right\} = \| h\| _\infty \phi (X), \\
& |f_h(Y)| \leq \| h\| _\infty \left\{ (|k|+|Y|)|b(Y)| + \frac 12 a(Y) \right\} = \| h\| _\infty \phi (Y)
\end{align*}
almost surely.
Hence,
\begin{align*}
&\left| {\bf E} \left[ f_h (Y)\right] -  {\bf E} \left[ f_h (X)\right] \right| \\[2mm]
&\leq \left| {\bf E} \left[ f_h (Y); \phi (Y) \leq M \right] -  {\bf E} \left[ f_h (X); \phi (X) \leq M\right] \right| \\[2mm]
&\quad + \left| {\bf E} \left[ f_h (Y); \phi (Y) > M \right] -  {\bf E} \left[ f_h (X); \phi (X) > M\right] \right| \\[2mm]
&\leq \| h\| _\infty M d_{\rm TV}({\mathcal L}(Y), \mu) \\[2mm]
&\quad + \| h\| _\infty \left( {\bf E} \left[ \phi (Y); \phi (Y) > M \right] +  {\bf E} \left[ \phi (X); \phi (X) > M\right] \right) .
\end{align*}
This inequality and (\ref{eq:thmequi01}) yield
\begin{equation}\label{eq:thmequi02}
\begin{array}{l}
\displaystyle {\bf E} \left[ \left| {\bf E} \left[ \left. \frac12 a(Y) + \langle D(-L)^{-1}\left\{ b(Y)-\mathbf{E}[b(Y)]\right\} , DY \rangle _H \right| Y \right] \right|  \right]  \\[5mm]
\displaystyle \leq M d_{\rm TV}({\mathcal L}(Y), \mu) +  {\bf E} \left[ \phi (Y); \phi (Y) > M \right] +  {\bf E} \left[ \phi (X); \phi (X) > M\right] \\[3mm]
\displaystyle \quad +  \left| \mathbf{E}\left[ b(Y)\right] \right| {\bf E} \left[ |Y -k|\right] .
\end{array}
\end{equation}
Since ${\bf E}[b(X)]=0$, we have
\begin{align*}
& \left| \mathbf{E}\left[ b(Y)\right] \right| \\
& =  \left| {\bf E}\left[ b(Y)\right] - {\bf E}\left[ b(X)\right] \right| \\
& \leq  \left| {\bf E}\left[ b(Y); |b(Y)|\leq M \right] - {\bf E}\left[ b(X); |b(X)| \leq M\right] \right| \\
&\quad + {\bf E}\left[ |b(Y)| ; |b(Y)|> M \right] + {\bf E}\left[| b(X)| ; |b(X)| > M\right] \\
&\leq M d_{\rm TV}({\mathcal L}(Y), \mu) + {\bf E}\left[ |b(Y)| ; |b(Y)|> M \right] + {\bf E}\left[| b(X)| ; |b(X)| > M\right] .
\end{align*}
From this inequality and (\ref{eq:thmequi02}) we obtain (\ref{eq:thmequi00}).

It is easy to see
\begin{align*}
{\bf E} \left[ \phi (Y); \phi (Y) > M \right] 
&\leq M ^{1-p} {\bf E} \left[ \phi (Y)^p; \phi (Y) > M \right] \\
&\leq M ^{1-p} {\bf E} \left[ \phi (Y)^p \right] .
\end{align*}
Similarly we have
\begin{align*}
{\bf E} \left[ \phi (X); \phi (X) > M \right] &\leq M ^{1-p} {\bf E} \left[ \phi (X)^p \right] , \\
{\bf E} \left[ |b(Y)| ; |b(Y)| > M \right] &\leq M ^{1-p} {\bf E} \left[ |b(Y)|^p \right], \\
{\bf E} \left[ |b(X)| ; |b(X)| > M \right] &\leq M ^{1-p} {\bf E} \left[ |b(X)|^p \right] .
\end{align*}
Hence, (\ref{eq:thmequi00}) implies
\begin{align*}
&{\bf E} \left[ \left| {\bf E} \left[ \left. \frac12 a(Y) + \langle D(-L)^{-1}\left\{ b(Y)-\mathbf{E}[b(Y)]\right\} , DY \rangle _H \right| Y \right] \right|  \right] \\
&\leq (M + {\bf E} \left[ |Y -k|\right]) d_{\rm TV}({\mathcal L}(Y), \mu) \\[2mm]
&\quad + M^{1-p} ( {\bf E} \left[ |b(Y)|^p + |b(X)|^p \right] {\bf E} \left[ |Y-k| \right]+  {\bf E} \left[ \phi (Y)^p + \phi (X)^p \right]) .
\end{align*}
Letting $M= d_{\rm TV}({\mathcal L}(Y), \mu) ^{-1/p}$ and using the fact that $d_{\rm TV}({\mathcal L}(Y), \mu) \leq 1$, we obtain the desired estimate.
\end{proof}

Theorems \ref{th:bound} and \ref{thm:equi} give a necessary and sufficient condition of the convergence of in total variation.

\begin{theorem}\label{thm:conv}
Suppose Assumption \ref{ass}.
Let $F_m$ be a sequence of $S$-valued random variables in ${\mathbb D}^{1,2}$ and assume that the distribution of $F_m$ is absolutely continuous with respect to the Lebesgue measure for $m\in {\mathbb N}$.
Assume that there exists $\varepsilon >0$ such that
\begin{align*}
&{\bf E} \left[ | X| \right] + {\bf E} \left[ |b(X)|^{1+\varepsilon} \right] +  {\bf E} \left[ \phi (X)^{1+\varepsilon} \right] <\infty \\
&\sup _{m\in {\mathbb N}} \left( {\bf E} \left[ | F_m| \right] + {\bf E} \left[ |b(F_m)|^{1+\varepsilon} \right] + {\bf E} \left[ \phi (F_m)^{1+\varepsilon} \right] \right) <\infty .
\end{align*}
Then, the following statements are equivalent.
\begin{enumerate}
\item \label{thm:conv1} The distribution of $F_m$ converges to the distribution of $X$ in total variation.
\item \label{thm:conv2} $\displaystyle \lim _{m\rightarrow \infty} {\bf E} \left[ \left|\frac12 a(F_m) +  {\bf E} \left[ \left. \langle D(-L)^{-1}\left\{ b(F_m)-\mathbf{E}[b(F_m)]\right\} , DF_m \rangle _H \right| F_m \right] \right|  \right] =0$ and $\displaystyle \lim _{m \rightarrow \infty} E[b(F_m)] =0$.
\end{enumerate}
\end{theorem}

\begin{proof}
The implication of \ref{thm:conv1} to \ref{thm:conv2} follows from Theorem \ref{thm:equi}, the uniform integrability of $\{ b(F_m)\}$ and ${\bf E}[b(X)]=0$.
The converse implication follows from Theorem \ref{th:bound}.
\end{proof}

\begin{remark}
\begin{itemize}
\item Our Theorem \ref{thm:conv} is related to the result stated in Theorem 3.2 in \cite{APP}. In this reference, a criterium involving the conditional expectation for the convergence towards a linear combination of chi squared random variable is given. 
\item If the random variables $F_{m}$ from Theorem \ref{thm:conv} belong to a Wiener chaos of fixed orde and the limit distribution is not trivial, then the convergence in total variation is equivalent to the convergence in distribution (see \cite{NoPo}).
\item The result in Theorem \ref{thm:conv} indicates that the condition 
\[
\frac{1}{2} a(F_{m})- {\bf E}\left[ \left. \langle DF_{m}, D(-L) ^{-1} \left( b(F_{m}) - {\bf E}[b(F_m)] \right) \rangle _H \right| F_m \right] \to _{m} 0\ \mbox{in}\ L^1(\Omega)
\]
is sometimes maybe stronger than the convergence in law.
\end{itemize}
\end{remark}

From now on, for simplicity, we additionally assume that ${\bf E} [X] = \int _S x \mu (dx) =0$ and $b(x)=-x$.
In this case, $k=0 \in S$ and $\phi (x):= \frac 12 a(x) + |x|^2$.

\begin{corollary}\label{cor:conv}
Suppose Assumption \ref{ass}.
Let $F_m$ be a sequence of $S$-valued random variables in ${\mathbb D}^{1,2}$ and assume that the distribution of $F_m$ is absolutely continuous with respect to the Lebesgue measure for $m\in {\mathbb N}$.
Assume that there exists $\varepsilon >0$ such that
\[
{\bf E} \left[ \frac 12 a(X)^{1+\varepsilon} + |X|^{2+\varepsilon} \right] <\infty , \sup _{m\in {\mathbb N}} {\bf E} \left[ \frac 12 a(F_m)^{1+\varepsilon} + |F_m|^{2+\varepsilon} \right] <\infty .
\]
Then, the following statements are equivalent.
\begin{enumerate}
\item \label{cor:conv1} The distributions of $F_m$ converges to the distribution of $X$ in total variation.
\item \label{cor:conv2} $\displaystyle \lim _{m\rightarrow \infty} {\bf E} \left[ \left| \frac12 a(F_m) - {\bf E} \left[ \left. \langle D(-L)^{-1} F_m, DF_m \rangle _H \right| F_m \right] \right|  \right] =0$\\ and $\displaystyle \lim _{m \rightarrow \infty} E[F_m] =0$.
\end{enumerate}
\end{corollary}

\begin{remark}
The conditional expectation
\[
{\bf E} \left[ \left. \langle D(-L)^{-1} F, DF \rangle _H \right| F \right]
\]
is called the Stein factor associated with the random variable $F$, and Corollary \ref{cor:conv} implies that the convergence of the distributions in total variation is characterized by the Stein factor.
Recently, the Stein factor and the applications are studied as a hot topic, and many results have been obtained (see \cite{NPS} for details).
\end{remark}

We continue the consideration of the necessary and sufficient conditions of the convergence in distributions.
We additionally assume $\int _S a(x)^2 \mu (dx) < \infty$.

\begin{prop}\label{prop:prop24}
Consider a sequence $(F_m) _{m\geq 1 }$ of $S$-valued random variables in $\mathbb{D} ^{1,4}$, which satisfies that there exists $\varepsilon >0$ satisfying
\begin{equation}\label{eq:assprop24}
\sup _m {\bf E}\left[ a(F_m)^{2+\varepsilon} + |F_m|^{4+\varepsilon} \right] <\infty.
\end{equation}
Suppose that the distribution of $F_m$ converges to $\mu$ as $m\to \infty$ Then the condition that $\frac12 a(F_m) - \langle DF_{m}, D(-L) ^{-1} F_{m} \rangle _H$ converges to $0$ in $L^2(\Omega )$ as $m\rightarrow \infty$ is equivalent to the condition that $\frac 14 \mathbf{E}[a(F_m)^2] -  \mathbf{E}[\langle DF_{m}, D(-L) ^{-1} F_{m} \rangle _H^2 ]$ converges to $0$ as $m\rightarrow \infty$.
\end{prop}

\begin{proof}
Since distribution of $F_m$ converges to $\mu$, by Theorem \ref{char2} we have
\[
\lim _{m\rightarrow \infty }\mathbf{E}\left[ \frac12 a(F_m)h'(F_m) - F_m h(F_m)\right] =0
\]
for $h\in C^1(S)$ such that $\sup _m {\bf E}\left[ |h(F_m)|^{2+\varepsilon} + |h'(F_m)|^{2+\varepsilon} \right] <\infty$.
On the other hand,
\[
\mathbf{E}\left[ F_m h(F_m)\right] = \mathbf{E}\left[ h(F_m) (\delta D)(-L)^{-1} F_m\right] =  \mathbf{E}\left[ h'(F_m)\langle DF_{m}, D(-L) ^{-1} F_{m} \rangle _H \right] .
\]
Hence, we have
\[
\lim _{m\rightarrow \infty }\mathbf{E}\left[ \left( \frac12 a(F_m) - \langle DF_{m}, D(-L) ^{-1} F_{m} \rangle _H \right) h'(F_m)\right] =0
\]
for $h\in C^1(S)$ such that $\sup _m {\bf E}\left[ |h(F_m)|^{2+\varepsilon} + |h'(F_m)|^{2+\varepsilon} \right] <\infty$.
We have assumed (\ref{eq:assprop24}).
Hence, we can choose $h\in C^1(S)$ such that $h'(x) = a(x)$.
Thus, we obtain
\begin{equation}\label{eq:prop24}
\lim _{m\rightarrow \infty }\left( \frac12 \mathbf{E}\left[ a(F_m)^2\right] - \mathbf{E}\left[ a(F_m) \langle DF_{m}, D(-L) ^{-1} F_{m} \rangle _H \right] \right) =0.
\end{equation}
On the other hand,
\begin{align*}
& \mathbf{E}\left[ \left( \frac12 a(F_m) - \langle DF_{m}, D(-L) ^{-1} F_{m} \rangle _H \right) ^2 \right] \\
& = \mathbf{E}[\langle DF_{m}, D(-L) ^{-1} F_{m} \rangle _H^{2}]  - \frac 14 \mathbf{E}[a(F_m)^2] \\
&\hspace{2cm} + \frac12 \mathbf{E}\left[ a(F_m)^2\right] -  \mathbf{E}\left[ a(F_m) \langle DF_{m}, D(-L) ^{-1} F_{m} \rangle _H \right].
\end{align*}
Hence, applying (\ref{eq:prop24}), we obtain the required equivalence.
\end{proof}

The necessary and sufficient condition in Theorem \ref{thm:conv} for the convergence of the sequence $ (F_{m}) _{m\geq 1}$ to the distribution $\mu$ in total variation is that the sequence 
\[
{\bf E} \left[ \left. \frac12 a(F_m) + \langle D(-L)^{-1}\left\{ b(F_m)-\mathbf{E}[b(F_m)]\right\} , DF_m \rangle _H \right| F_m \right]
\]
converges to zero in $L^{1}(\Omega)$ as $m\to \infty$. Due to the appearance of the conditional expectation, this  condition is sometimes hard to be checked. Therefore, we give below in Theorem \ref{thm:4momG} an  alternative result which does not involve the conditional expectation.  

\begin{theorem}\label{thm:4momG}
Assume that there exists a random variable $G \in {\mathbb D}^{1,4}$ such that the distribution of $G$ is equal to $\mu$ and $\langle DG, D(-L)^{-1}G \rangle _H$ is measurable with respect to the $\sigma$-field generated by $G$.
Consider a sequence $(F_m) _{m\geq 1}$ of $S$-valued random variables in $\mathbb{D} ^{1,4}$ such that ${\bf E}[F_m]=0$ and (\ref{eq:assprop24}) is satisfied.
Then, the following statements are equivalent.
\begin{enumerate}
\item \label{4momG1} The vector valued random variable $(F_m, \langle DF_{m}, D(-L) ^{-1} F_{m}\rangle _H)$ converges to $(G, \langle DG, D(-L)^{-1} G\rangle _{H} )$ in distribution as $m\rightarrow \infty$, and $\{ a(F_m) ^2\}$ and $\langle DF_{m}, D(-L) ^{-1} F_{m}\rangle _H ^2$ are uniformly integrable.
\item \label{4momG4} $\frac12 a(F_m) - \langle DF_{m}, D(-L) ^{-1} F_{m}\rangle _H$ converges to $0$ in $L^2(\Omega)$ as $m\rightarrow \infty$.
\end{enumerate}
\end{theorem}

\begin{proof}
Assume $\ref{4momG4}$.
By Theorem \ref{th:bound} $F_m$ converges to $G$ in distribution.
Since $\langle DG, D(-L)^{-1}G \rangle _H$ is measurable with respect to the $\sigma$-field generated by $G$, by Theorem \ref{char} we obtain
\begin{equation}\label{eq:thm4momG}
\frac 12 a(G)= \langle DG, D(-L)^{-1}G \rangle _H
\end{equation}
almost surely.
Hence, by the convergence of $F_m$ to $G$ in distribution and $\ref{4momG4}$, we have for $h_1,h_2 \in C_b({\mathbb R})$
\begin{align*}
& \limsup _{m\rightarrow \infty} \left| \mathbf{E}\left[ h_1(F_m) h_2\left( \langle DF_{m}, D(-L) ^{-1} F_{m}\rangle _H \right) \right] \right. \\
&\quad \hspace{5cm} \left. - \mathbf{E}\left[ h_1(G) h_2\left( \langle DG, D(-L)^{-1}G\rangle _{H} \right) \right] \right| \\
& \leq \limsup _{m\rightarrow \infty} \left| \mathbf{E}\left[ h_1(F_m) h_2\left( \frac 12 a(F_m) \right) \right] - \mathbf{E}\left[ h_1(G) h_2\left( \frac 12 a(G) \right) \right] \right| \\
& \quad + \limsup _{m\rightarrow \infty} \left| \mathbf{E}\left[ h_1(F_m) \left\{ h_2\left( \frac 12 a(F_m) \right) - h_2\left( \langle DF_{m}, D(-L) ^{-1} F_{m}\rangle _H \right) \right\} \right] \right| \\
& =0.
\end{align*}
Thus, \ref{4momG1} is obtained.

Assume \ref{4momG1}.
Noting that (\ref{eq:thm4momG}) holds, by the assumption \ref{4momG1} we have
\begin{align*}
&\lim _{m\rightarrow \infty}\left( \frac 14 \mathbf{E}[a(F_m)^2] - {\bf E}\left[ \langle DF_{m}, D(-L) ^{-1} F_{m}\rangle _H^2 \right]\right) \\
&\hspace{2cm} = \frac 14 \mathbf{E}[a(G)^2] - \mathbf{E}\left[ \langle DG, D(-L)^{-1}G \rangle _H ^2\right] =0.
\end{align*}
Thus, by Proposition \ref{prop:prop24}, \ref{4momG4} is obtained.
\end{proof}

\begin{remark}
If we suppose the assumptions in both Corollary \ref{cor:conv} and Theorem \ref{thm:4momG}, then the equivalent conditions in Corollary \ref{cor:conv} and Theorem \ref{thm:4momG} will be also equivalent.
This fact  perhaps implies that generally the convergence of distributions to the invariant measures of diffusion processes would be complicated, while the fourth moment theorems with respect to Gaussian distributions and Gamma distributions are simple.
\end{remark}

In order to apply the above result, we need to know how to compute the squared diffusion coefficient for a given law and to check the measurability of $\langle DG, D(-L) ^{-1} G \rangle _H$ with respect to $G$. 

\begin{prop}\label{prop:mble}
\begin{enumerate}
\item \label{prop:mble1} Let $F = cW(h)$ where $c \in {\mathbb R}$ and $h\in H$ such that $\| h\| _H =1$.
Then,
\[
\langle D(-L)^{-1}F, DF\rangle _H = c^2.
\]
In particular, $\langle D(-L)^{-1}F, DF\rangle _H$ is measurable with respect to the $\sigma$-field generated by $F$.

\item \label{prop:mble1.5} Let $F = c(W(h)^2-1)$ where $c \in {\mathbb R}$ and $h\in H$ such that $\| h\| _H =1$.
Then,
\[
\langle D(-L)^{-1}(F-{\mathbf E}[F]), DF\rangle _H = 2cF + 2c^2.
\]
In particular, $\langle D(-L)^{-1}F, DF\rangle _H$ is measurable with respect to the $\sigma$-field generated by $F$.

\item \label{prop:mble2} Let $F=e^{cW(h)}$ where $c \in {\mathbb R}$ and $h\in H$ such that $\| h\| _H =1$.
Then,
\[
\langle D(-L)^{-1}(F-{\mathbf E}[F]), DF\rangle _H = c^2 F \int _0^1 F^v e^{\frac {c^2}2(1-v^2)} dv.
\]
In particular, $\langle D(-L)^{-1}(F-{\mathbf E}[F]), DF\rangle _H$ is measurable with respect to the $\sigma$-field generated by $F$.

\item \label{prop:mble3} Let $F= e^{c\sum _{k=1}^n W(h_k) ^2}$ where $n\in {\mathbb N}$, $c \in (-\infty , 1/2)$ and $h_1,h_2,\dots ,h_n \in H$ such that $\| h_k\| _H =1$ for $k=1,2,\dots ,n$, and $\langle h_k, h_l\rangle _H =0$ for $k,l =1,2,\dots ,n$.
Then,
\[
\langle D(-L)^{-1}(F-{\mathbf E}[F]), DF\rangle _H =   4c F(\log F) \int _0^1 \frac{vF^{ \frac{v^2}{1-2c(1-v^2)}}}{(1-2c(1-v^2))^{\frac n2 +1}}  dv.
\]
In particular, $\langle D(-L)^{-1}(F-{\mathbf E}[F]), DF\rangle _H$ is measurable with respect to the $\sigma$-field generated by $F$.

\item \label{prop:mble5} Let $F= W(h)^n -{\bf E}\left[ W(h)^n \right]$ where $n\in {\mathbb N}$ and $h\in H$ such that $\| h\| _H =1$.
Then,
\begin{align*}
&\langle D(-L)^{-1}F, DF\rangle _H \\
&= \frac{n^{2}}{2} \sum_{l=0} ^{\lfloor (n-1)/2 \rfloor} \frac{(n-1)! (2l-1)!!}{(2l)! \, (n-1-2l)!} \beta \left( \frac n2 -l,l+1\right) \left| F+{\bf E}\left[ W(h)^n\right] \right| ^{2-2(l+1)/n}.
\end{align*}
where $\lfloor (n-1)/2 \rfloor$ is the largest integer which is no larger than $(n-1)/2$.
In particular, $\langle D(-L)^{-1}F, DF\rangle _H$ is measurable with respect to the $\sigma$-field generated by $F$.
\end{enumerate}
\end{prop}

\begin{proof}
When $F$ is given as in \ref{prop:mble1}, ${\mathbf E}[F]=0$ and
\[
\langle D(-L)^{-1}F, DF\rangle _H = c^2 \langle DW(h), DW(h)\rangle _H = c^2.
\]
Hence, \ref{prop:mble1} holds.

When $F$ is given as in \ref{prop:mble1.5}, ${\mathbf E}[F]=0$ and
\begin{align*}
\langle D(-L)^{-1}F, DF\rangle _H &= \frac 12 c^2 \langle D(W(h)^2-1), D(W(h)^2-1)\rangle _H \\
&= 2c^2 W(h)^2 = 2cF + 2c^2.
\end{align*}
Hence, \ref{prop:mble1.5} holds.

Next we show the case that $F$ is given as in \ref{prop:mble2}.
In this case, we use the following formula proved in \cite{NV}:
if  $Y= f(N)-{\mathbf E}[f(N)]$ where $f\in C_b^1( \mathbb{R} ^{n}; \mathbb{R})$ with bounded derivatives and $N=(N_{1},..., N_{n}) $ is a Gaussian vector   with zero mean and covariance matrix $K=(K_{i,j}) _{i,j=1,..,n}$ then
\begin{equation}\label{nv}\begin{array}{l}
\displaystyle \langle D(-L) ^{-1} (Y-{\mathbf E}[Y]), DY\rangle _H \\
\displaystyle = \int_0 ^\infty e^{-u} du {\mathbf E}'\left[ \sum_{i,j=1} ^{n} K_{i,j} \frac{\partial f}{\partial x_{i}} (N) \frac{\partial f} {\partial x_{j}} (e^{-u}N+ \sqrt{1-e^{-2u}}N' )\right].
\end{array}\end{equation}
Here  $N'$ denotes an independent copy of $N$, and $N$ and $N'$ are defined on a product probability space
 $\left( \Omega \times \Omega ', {\cal{F}} \otimes {\cal{F}}, P\times P'\right)$ and ${\mathbf E}'$
 denotes the expectation with respect to the probability measure $P'$.

By (\ref{nv}), we have
\begin{align*}
&\langle D(-L)^{-1}(F-{\mathbf E}[F]), DF\rangle _H \\
&= \int _0^\infty e^{-u}du {\mathbf E}'\left[ c^2 e^{cW(h)} e^{c(e^{-u}W(h)+\sqrt{1-e^{-2u}}W'(h))}\right] \\
&= c^2 e^{cW(h)} \int _0^1 e^{cvW(h)} {\mathbf E}'\left[ e^{c\sqrt{1-v^2} W'(h)}\right] dv\\
&= c^2 F \int _0^1 F^v e^{\frac {c^2}2(1-v^2)} dv.
\end{align*}
Hence, \ref{prop:mble2} holds.

Next we show the case that $F$ is given as in \ref{prop:mble3}.
In this case, we use Lemma 1 in Section 4 of \cite{KuTu}.
The statement of the lemma is as follows.
Let $Z$ be a random variable with the standard normal distribution, $K>-\frac 12$, $C\in {\mathbb{R}}$ and $a\in (0,1)$.
Then,
\begin{align}
&\label{aux1} {\mathbf E}\left[ e^{ -K( C + \sqrt{1-a^{2}}Z  ) ^{2}} \right] = \frac{1}{ \sqrt{1+2K (1-a^{2})}} e^{- \frac{C^{2}K}{1+2K (1-a^{2}) }}\\
&\label{aux2} {\mathbf E}\left[ \left( C + \sqrt{1-a^{2}} Z\right) e^{ -K( C + \sqrt{1-a^{2}}Z  ) ^{2}} \right] =\frac{C}{\left( 1+ 2K (1-a^{2}) \right) ^{\frac{3}{2}}}e^{- \frac{C^{2}K}{1+2K (1-a^{2}) }}.
\end{align}

It is sufficient to show the case that $c\neq 0$.
By (\ref{nv}), we have
\begin{align*}
&\langle D(-L)^{-1}(F-{\mathbf E}[F]), DF\rangle _H \\
&= \int _0^\infty e^{-u}du {\mathbf E}'\left[ \sum _{i=1}^n 2cW(h_i) e^{c\sum _{j=1}^n W(h_j)^2} \right. \\
&\left. \phantom{\sum _{i=1}^n} \times 2c (e^{-u}W(h_i)+\sqrt{1-e^{-2u}}W'(h_i))e^{c\sum _{j=1}^n(e^{-u}W(h_j)+\sqrt{1-e^{-2u}}W'(h_j))^2}\right] \\
&= 4c^2 \int _0^\infty e^{-u}du  \sum _{i=1}^nW(h_i) e^{c\sum _{j=1}^n W(h_j)^2} \prod _{j\neq i} {\mathbf E}'\left[ e^{c(e^{-u}W(h_j)+\sqrt{1-e^{-2u}}W'(h_j))^2}\right]\\
&\quad \times {\mathbf E}'\left[ (e^{-u}W(h_i)+\sqrt{1-e^{-2u}}W'(h_i))e^{c(e^{-u}W(h_i)+\sqrt{1-e^{-2u}}W'(h_i))^2}\right] \\
&= 4c^2 \int _0^1 dv  \sum _{i=1}^nW(h_i) e^{c\sum _{j=1}^n W(h_j)^2} \left( \prod _{j\neq i} {\mathbf E}'\left[ e^{c(vW(h_j)+\sqrt{1-v^2}W'(h_j))^2}\right] \right) \\
&\quad \times {\mathbf E}'\left[ (vW(h_i)+\sqrt{1-v^2}W'(h_i))e^{c(vW(h_i)+\sqrt{1-v^2}W'(h_i))^2}\right].
\end{align*}
Applying (\ref{aux1}) and (\ref{aux2}) to this equation, we obtain
\begin{align*}
&\langle D(-L)^{-1}(F-{\mathbf E}[F]), DF\rangle _H \\
&= 4c^2 \int _0^1 dv  \sum _{i=1}^nW(h_i) e^{c\sum _{j=1}^n W(h_j)^2} \left( \prod _{j\neq i} \frac{\exp\left( \frac{c v^2 W(h_j)^2}{1-2c(1-v^2)}\right)}{\sqrt{1-2c(1-v^2)}} \right) \\
&\quad \times \frac{vW(h_i)\exp\left( \frac{cv^2 W(h_j)^2}{1-2c(1-v^2)}\right)}{(1-2c(1-v^2))^{\frac 32}} \\
&=  4c^2\left( \sum _{i=1}^n W(h_i)^2 \right) e^{c\sum _{j=1}^n W(h_j)^2} \int _0^1 \frac{v\exp \left( \frac{cv^2 \sum _{j=1}^n W(h_j)^2}{1-2c(1-v^2)}\right)}{(1-2c(1-v^2))^{\frac n2 +1}}  dv \\
&= 4c F(\log F) \int _0^1 \frac{vF^{ \frac{v^2}{1-2c(1-v^2)}}}{(1-2c(1-v^2))^{\frac n2 +1}}  dv.
\end{align*}
Hence, \ref{prop:mble3} holds.

Finally we show \ref{prop:mble5}.
By (\ref{nv}) we have
\begin{align*}
&\langle DF, D(-L)^{-1} F\rangle _H\\
&= n^2 \int_{0} ^{\infty} e^{-u}du {\bf E}'\left[ W(h) ^{n-1} \left( e^{-u} W(h) + \sqrt{ 1- e ^{-2u} } W'(h)\right) ^{n-1} \right]\\
&= n^2 W(h)^{n-1} \int_0^1 dy {\bf E}'\left[ \left( yW(h)+ \sqrt{1-y^{2} } W'(h)\right) ^{n-1} \right] .
\end{align*}
where $W'(h)$ denotes an independent copy of $W(h)$, and $W(h)$ and $W'(h)$ are defined on a product of abstract Wiener spaces 
 $\left( \Omega \times \Omega, H \otimes H, P\times P'\right)$ and ${\mathbf E}'$ denotes the expectation with respect to the Gaussian measure $P'$.
Hence,
\begin{align*}
&\langle DF, D(-L)^{-1} F\rangle _H \\
&= n^{2} W(h) ^{n-1} \int_0^1 dy \sum_{j=0} ^{n-1} \frac{(n-1)!}{j! \, (n-1-j)!} (yW(h)) ^{n-1-j} {\bf E}' \left[ \left( \sqrt{1-y^{2}}W'(h)\right) ^{j} \right]\\
&= n^{2} \sum_{l=0} ^{\lfloor (n-1)/2 \rfloor} \frac{(n-1)! (2l-1)!!}{(2l)! \, (n-1-2l)!} W(h)^{2n-2-2l} \int_0^1 y^{n-1-2l} (1-y^{2} ) ^l dy
\end{align*}
where we regard $(-1)!! =1$.
Noting that the beta function $\beta (a,b)$ for $a,b>0$ satisfies
\begin{align*}
\beta (a,b) &= \int_0^1 t^{a-1} (1-t)^{b-1} dt \\
&= 2 \int_0^1 s^{2a-1} (1-s^2)^{b-1} ds,
\end{align*} 
we obtain
\begin{align*}
&\langle DF, D(-L)^{-1} F\rangle _H \\
&= \frac{n^{2}}{2} \sum_{l=0} ^{\lfloor (n-1)/2 \rfloor} \frac{(n-1)! (2l-1)!!}{(2l)! \, (n-1-2l)!} \beta \left( \frac n2 -l,l+1\right) \left| F+{\bf E}\left[ W(h)^n\right] \right| ^{2-2(l+1)/n}.
\end{align*}
\end{proof}

Now we give several common probability distributions as examples.

\begin{example}\label{exam1} (Gaussian and Gamma) \rm
Let $F = cW(h)$ where $c \in {\mathbb R}$ and $h\in H$ such that $\| h\| _H =1$.
Then, ${\mathbf E}[F]=0$.
By \ref{prop:mble1} of Proposition \ref{prop:mble}, we have  $\langle D(-L)^{-1}F, DF\rangle _H$ is measurable with respect to the $\sigma$-field generated by $F$, and we have
\[
\frac{1}{2} a(x) = c^{2}, \quad x \in {\mathbb R}.
\]

Let $F = c(W(h)^2-1)$ where $c \in {\mathbb R}$ and $h\in H$ such that $\| h\| _H =1$.
Then, ${\mathbf E}[F]=0$.
By \ref{prop:mble1.5} of Proposition \ref{prop:mble}, we have $\langle D(-L)^{-1}F, DF\rangle _H$ is measurable with respect to the $\sigma$-field generated by $F$, and
\[
\frac{1}{2} a(x)= 2 c^{2} (x+1), \quad x>-1.
\]
\end{example}

\begin{example}(Log-normal) \rm 
Let $F=e^{cW(h)}$ where $c \in {\mathbb R} \setminus \{ 0\}$ and $h\in H$ such that $\| h\| _H =1$.
By \ref{prop:mble2} of Proposition \ref{prop:mble}, we have $\langle D(-L)^{-1}(F-{\mathbf E}[F]), DF\rangle _H$ is measurable with respect to the $\sigma$-field generated by $F$, and
\[
\frac{1}{2} a(x)= c^2 x\int _0^1 x^v e^{\frac {c^2}2(1-v^2)} dv, \quad x>0.
\]
\end{example}

\begin{example}(Exponential of the sum of Gaussian squares) \rm 
Let $F= e^{c\sum _{k=1}^n W(h_k) ^2}$ where $n\in {\mathbb N}$, $c \in (-\infty , 1/2)  \setminus \{ 0\}$ and $h_1,h_2,\dots ,h_n \in H$ such that $\| h_k\| _H =1$ for $k=1,2,\dots ,n$, and $\langle h_k, h_l\rangle _H =0$ for $k,l =1,2,\dots ,n$.
By \ref{prop:mble3} of Proposition \ref{prop:mble}, we have  $\langle D(-L)^{-1}(F-{\mathbf E}[F]), DF\rangle _H$ is measurable with respect to the $\sigma$-field generated by $F$ and
\[
\frac{1}{2}a(x)=4c x(\log x) \int _0^1 \frac{vx^{ \frac{v^2}{1-2c(1-v^2)}}}{(1-2c(1-v^2))^{\frac n2 +1}}  dv
\]
for $x$ belonging to the support of the law (that depends on the choice of $c$).
This case includes the uniform distribution $U[0,1]$ (by taking for example $n=2$ and $c=-\frac{1}{2}$), the centered  Pareto distribution (if we consider $n=2$ and $c=\frac{1}{4}$ we obtain a centered Pareto distribution with parameter 2) or the centered beta distribution (by taking $c=-1$, $n=2$ we have a random variable with centered  beta law with  parameters $\frac{1}{2}$ and $1$). We refer to \cite{KuTu} for a review on these probability distributions. 
\end{example}

\begin{example}(Power of  Gaussian) \rm
Consider the random variable
\[
G= W(h)^n -{\bf E}\left[ W(h)^n \right]
\]
where $h\in H$ such that $\| h\| _H=1$ and $n\in {\mathbb N}$.
By \ref{prop:mble5} of Proposition \ref{prop:mble}, we have  $\langle D(-L)^{-1}(F-{\mathbf E}[F]), DF\rangle _H$ is measurable with respect to the $\sigma$-field generated by $F$ and
\begin{align*}
&\frac{1}{2}a(x) \\
&= \frac{n^2}{2} \sum_{l=0} ^{\lfloor (n-1)/2 \rfloor} \frac{(n-1)! (2l-1)!!}{(2l)! \, (n-1-2l)!} \beta \left( \frac n2 -l,l+1\right) \left| x+{\bf E}\left[ W(h)^n\right] \right| ^{2-2(l+1)/n}
\end{align*}
for $x$ in the support of the law of $G$.
The case that $n=1$ and $n=2$ are associated to the standard normal distribution and the centered Gamma distribution, respectively (see Example \ref{exam1}).
When $n=3$,
\[
\frac{1}{2}a(x) = 3|x|^{4/3} + 6 |x|^{2/3}, \quad x \in {\mathbb R}.
\]
When $n=4$,
\[
\frac{1}{2}a(x) = 4(x+3)^{3/2} + 12 (x+3), \quad x\in (-3, \infty ).
\]
\end{example}

\begin{remark}
A concrete example of application of the theory presented in this section is given in \cite{T3} and concerns the  convergence of a sequence of Pareto distributed random variables to the exponential law. Indeed, if we consider a sequence of random variables $(F_{n}) _{n \geq 1} $ such that each  $F_{n}$ follows a Pareto distribution with parameter $n \geq 2$ then it  is well-known that the sequence $(Y_{n}) _{n\geq 1}$ given by $ Y_{n}= (n-1)( F_{n}-1)$ converges in law, as $n \to \infty$, to the exponential distribution with parameter 1. The rate of convergence has been estimated  in \cite{T3} using the techniques presented in this section.
\end{remark}

\section{Extension of the Fourth Moment Theorem}

In this section, we analyze the weak   convergence of sequences of random variables in a Wiener chaos  to $X\sim \mu$.
The purpose of the argument is to generalize the Fourth Moment Theorem (Theorem \ref{th:4momO}).

Through this section, $\mu$ is assumed to satisfy the hypothesis in the previous section. In addition we set $b(x):=-x$ and assume that  $\int a(x)^2 \mu(dx) <\infty$ and the expectation of the invariant measure equals zero, i.e. $\int x \mu (dx) =\mathbf{E}[X]=0$.

Let us assume that   $F_{m}= I_{p} (f_{m}), m\in {\mathbb N}$ is sequence of multiple integrals in the $p$th Wiener, with $f_{m} \in H ^{\odot p} $ for every $m \in {\mathbb N}$. Our result in Theorem \ref{thm:4momG} says that the joint convergence in distribution as $m\to \infty$ of $(F_{m}, \frac{1}{p} \| DF_{m} \| _H^{2} )$  to $(G, \langle DG, D(-L) ^{-1} G\rangle _H)$, with $G\sim \mu$, is equivalent to the convergence in $L^{2}(\Omega)$  of $\frac{1}{2} a(F_{m}) -\frac{1}{p} \Vert DF_{m} \Vert _H^{2} $  to zero as $m\to \infty$, where $a$ denotes the squared diffusion coefficient associated to the law $\mu$.

The nice results in \cite{NuPe} and \cite{NoPe4} show that, in the case when the target distribution $\mu$ is Gaussian or (centered) Gamma, then the hypothesis on the convergence of $\Vert DF_{m} \Vert _H$ (or the conditional expectation in Theorem \ref{thm:conv}) can be eliminated. Indeed, by assuming that $\mathbf{E} F_{m} ^{2} \to _{m} \mathbf{E} G ^{2}$, the sequence $(F_{m})_{m\geq 1}$ convergence to $\mu$  (which is Gaussian or Gamma) if and only if   
the sequence  $\frac{1}{2} a(F_{m}) -\frac{1}{p} \Vert DF_{m} \Vert _H^{2} $  converges  to zero in $L^{2} (\Omega)$. 

It is then natural to ask if the same holds for other target distributions. We will show in the sequel  that the answer is in general negative. We will show that in some special situations, the convergence in law of a sequence of multiple integrals to a law (different from Gaussian and Gamma) can be expressed in terms of the convergence of sequence of the fourth moment and also in terms of some squared diffusion coefficients associated to the law and of the Malliavin derivatives.

 \subsection{Product normal distribution}
Let $X,Y\sim N(0,1) $ be two independent random variables ($X=I_{1}(h_{1}), Y= I_{1} (h_{2})$ with $h_{1}, h_{2} \in H$ orthonormal) and consider the random variable 
\[
G=XY.
\]
Then $G$ follows the so-called product normal distribution and its density is given by 
\[
f_G(x)= K_{0}(\vert x\vert ), \quad x\in {\mathbb R}
\]
where $K_{0}$ is the modified Bessel function of the second kind.
We remark that $f_G(x)$ diverges at $x=0$, in particular $f_G$ is not continuous in ${\mathbb R}$.
Hence, Theorems in Sections \ref{sec:review} and \ref{sec:NScond} are not applicable in this case.

We will characterize the convergence in distribution (which is equivalent to the convergence in total variation) of a particular class of sequences of multiple integrals toward the product normal distribution.

\begin{remark}\label{23i-8}
Let us recall that, if $X,Y$ are independent multiple integrals, then $(X, DX)$ and $(Y, DY)$ are independent random vectors.  Moreover, $\langle DX, DY\rangle _H =0$.  See Lemma 1 in \cite{BoTu}.
\end{remark}

\begin{theorem}\label{23i-5}
Suppose $(X_{k}= I_{p}(f_{k})) _{ k\in {\mathbb N}}$ and $ (Y_{k} = I_{q} (g_{k}))_{k\in {\mathbb N}}$ are two  sequences of multiple integrals in the $p$th and $q$th Wiener chaos respectively such that
\[
\lim _{k\rightarrow \infty} {\bf E}[ X_k^2] = 1 \quad \mbox{and}\quad \lim_{k\rightarrow \infty}{\bf E}[Y_k^2] = 1.
\]
Assume that for every $k \in {\mathbb N}$, the random variables $X_{k}$ and $Y_{k}$ are independent. Then the following are equivalent:
\begin{enumerate}
\item \label{23i-5-1}  $X_{k}Y_{k} $ converges in distribution as $k\to \infty$ to $XY$.

\item \label{23i-5-2} $\lim _{k\rightarrow \infty} {\bf E}\left[ ( X_{k} Y_{k}) ^4\right] = 9.$

\item \label{23i-5-3} $\lim_{k\rightarrow \infty} \Vert f_{k} \otimes _{r} f_{k} \Vert _{H^{\otimes 2(p-r)}}=0 $ and $\lim_{k\rightarrow \infty} \Vert g_{k} \otimes _{l} g_{k}\Vert _{H^{\otimes 2(q-l)}}= 0$ for every $r=1,2,\dots , p-1$ and $ l=1,2,\dots , q-1$.

\item \label{23i-5-4} $\lim_{k\rightarrow \infty} \Vert D X_{k} \Vert _H^{2}= p$ and $\lim_{k\rightarrow \infty} \Vert DY_{k} \Vert _H ^{2} = q$ in $L^{2}(\Omega)$.

\item \label{23i-5-5} $\lim_{k\rightarrow \infty} {\bf E}[ X_k^4] = 3$  and $\lim_{k\rightarrow \infty} {\bf E}[Y _k^4] = 3$.

\item \label{23i-5-6} $X_{k} \to _{k\to \infty} X$ and $Y_{k} \to _{k\to \infty} Y$ in distribution.

\end{enumerate}
\end{theorem}

\begin{proof}
Note that  for every $k\in {\mathbb N}$,  $X_{k} Y_{k}= I_{p+q} ( f_{k} \otimes g_{k}) $. This  comes from the product formula for multiple stochastic integrals (\ref{eq:Nualart2}) and the fact that, for every $k\geq 1$, the independence of $X_{k}$ and $ Y_{k}$ is equivalent to $f_{k} \otimes _{1} g_{k} =0$  almost everywhere, see \cite{UZ}.
Note that the implication \ref{23i-5-6} $\to$ \ref{23i-5-1} is trivial, and the equivalence between \ref{23i-5-3}, \ref{23i-5-4}, \ref{23i-5-5} and \ref{23i-5-6} follows from Theorem \ref{th:4momO}.
Hence, it is sufficient to prove
\[
\ref{23i-5-1} \to \ref{23i-5-2} \to \ref{23i-5-3}
\]
for the first assertion.
The implication \ref{23i-5-1} $\to$ \ref{23i-5-2} is obvious since $X_{k}Y_{k} $ is a multiple integral in the Wiener chaos of order $p+q$ and thus for every $r\geq 1$,  $\sup_{k} \mathbf{E} \left[ \vert X_{k} Y_{k} \vert ^{r} \right] <\infty$. Let us show \ref{23i-5-2} $\to$ \ref{23i-5-3}. We use formula (\ref{m4}) for the fourth moment of a multiple integral and we get
\[
{\bf E}\left[ I_{n} (f) ^{4}\right] = 3 {\bf E}\left[ I_{n}(f) ^{2} \right] ^2 + R_{f}
\]
where, for every $f\in H^{\odot n}$, we denoted
\begin{equation}
\label{rf}
R_{f}:= n! ^{2} \sum_{r=1} ^{n-1} \left(\frac{n!}{r!\, (n-r)!}\right) ^{2} \left[ \Vert f\otimes _{r} f \Vert _{H^{\otimes 2(n-r)}}^{2} + C_{2n-2r}^{n-r} \Vert f\tilde{\otimes} _{r} f\Vert  _{H^{\otimes 2(n-r)}}^{2} \right].
\end{equation}
We have, for every $k\geq 1$,
\begin{eqnarray*}
{\bf E}\left[ (X_{k} Y_{k} ) ^4 \right] &=& {\bf E}\left[ X_k^4\right] {\bf E}\left[ Y_k^4\right] \\
&=& (3 {\bf E}\left[X_k^2 \right]^2 + R_{f_{k}} ) (3{\bf E}\left[ Y_k^2\right]^2 + R_{g_{k}} )
\end{eqnarray*}
with $R_{f_{k}}$ and $R_{g_{k}}$ defined above by (\ref{rf}). Since $3{\bf E}\left[ X_k^2\right] ^2 \times 3{\bf E}\left[ Y_k^2\right] ^{2} \to 9$,  we easily get that $\Vert f_{k} \otimes _{r} f_{k} \Vert  _{H^{\otimes 2(p-r)}} \to _{k\to \infty} 0 $ and $\Vert g_{k} \otimes _{l} g_{k}\Vert  _{H^{\otimes 2(q-l)}} \to_{k \to \infty} 0$ for every $r=1,..., p-1$ and $ l=1,.., q-1$.
\end{proof}

\begin{remark}
The convergence of Wiener polynomials whose orders are uniformly dominated from above to a non-degenerate distribution in distribution is equivalent to the convergence in total variation (see Theorem 3.1 in \cite{NoPo}).
\end{remark}

\subsection{The sum of a standard normal random variable and an independent random variable}

Let $Z\sim N(0,1)$ and let $G$ be a centered random variable with the fourth moment and is independent of $Z$. We will characterize the convergence (of a special class of sequences of multiple stochastic integrals) to the distribution of the random variable $Z+G$. See \cite{BoTu} for the case when $Z,G$ are independent Gamma distributed random variables.
We assume that $G$ has a continuous density function on $\mathbb R$ (not necessary strictly positive on ${\mathbb R}$).
Below $a$ will denote the squared diffusion coefficient associated to $Z+G$.

\begin{prop}\label{23i-10}
 Let $(X_{k}= I_{p}(f_{k}))_{k\in {\mathbb N}}$ be a sequence of random variables in the $p$th Wiener chaos such that
\[
\lim _{k\to \infty} \mathbf{ E}[X_k^2] =1.
\]
Let $(Y_{k}=(I_{q}(g_{k}))_{ k\in {\mathbb N}}$ be another sequence of random variables in the $q$th Wiener chaos such that $X_k$ and $Y_k$ are independent for every $k\in {\mathbb N}$ and
\begin{equation*}
\lim _{k\rightarrow \infty} {\bf E}[Y_k^2] = {\bf E}[G^2]\quad \mbox{and}\quad \lim _{k\rightarrow \infty}{\bf E}[Y_k^4] = {\bf E}[ G^4].
\end{equation*}
Then the following assertions are equivalent:
\begin{enumerate}
\item \label{23i-10-1} $X_{k} \to _{k} Z$ and $ Y_{k}\to _{k} G$ in distribution 
\item \label{23i-10-2} $X_{k}+ Y_{k} \to  _{k}Z+ G$ in distribution
\end{enumerate}
If in addition, $a$ satisfies the assumptions in Corollary \ref{cor:conv}, then the following is also equivalent.
\begin{enumerate}
\item[\rm (iii)] \label{23i-10-3} $\displaystyle \frac{1}{2} a(X_{k} +Y_{k}) - {\bf E} \left[ \left. \frac{1}{p}\Vert DY_{k} \Vert _H^{2}+  \frac{1}{q}\Vert DX_{k} \Vert _H^{2} \right| X_{k}+Y_{k}\right] \to _{k} 0 \mbox{ in } L^{1}(\Omega).$
\end{enumerate}
\end{prop}
\begin{proof}
Clearly \ref{23i-10-1} $\to$ \ref{23i-10-2}.
We show that \ref{23i-10-2} $\to$ \ref{23i-10-1}.
We use the relation
\begin{eqnarray}
{\bf E}\left[ (X_k + Y_k)^4\right]&=& {\bf E}\left[ X_k^4\right]+ 6{\bf E}\left[X_k^2\right] {\bf E}\left[ Y_k^2\right] + {\bf E}\left[ Y_k^4 \right] \nonumber\\
&=& 3 {\bf E}\left[ X_k^2\right] +6 {\bf E}\left[ X_k^2\right] {\bf E}\left[ Y_k^2\right] + {\bf E}\left[ Y_k^4\right] + R_{f_{k}}\nonumber\\
&\sim &  3 + 6 {\bf E}\left[ G^2\right] + {\bf E}\left[ G^4\right] + R_{f_{k}}, \label{14f-6}
\end{eqnarray}
where we used (\ref{m4}) and $R_{f_{k}}$ given by (\ref{rf}). Recall that $\sim$ means that the sides have the same limit as $k\to \infty$. On the other hand, from \ref{23i-10-2} we have 
\begin{equation}
\label{14f-7}
{\bf E}\left[ (X_{k}+ Y_{k}) ^4\right] \to _{k} {\bf E}\left[ (Z+G)^4\right] = 3+ 6{\bf E}\left[G^2\right] + {\bf E}\left[G^4\right]. 
\end{equation}
By combining (\ref{14f-6}) and (\ref{14f-7}), we get  $R_{f_{k}}\to _{k} 0$ and thus
$$\Vert f_{k} \otimes _{r} f_{k} \Vert _{H^{\otimes 2(p-r)}}\to _{k} 0$$
for every $r=1,2, \dots , p-1$. This gives the converges in distribution of $X_{k}$ to $Z$ as $k\to \infty$ (see Theorem \ref{th:4momO}), and then we  easily get the convergence in law as $k\to \infty$ of $Y_{k}$ to $G$.

Theorem 3.1 in \cite{NoPo} implies that \ref{23i-10-2} is equivalent to the convergence of $X_k + Y_k$ in total variation.
Hence, if we additionally give the assumptions in Corollary \ref{cor:conv}, we obtain the equivalence between \ref{23i-10-2} and (iii) by Corollary \ref{cor:conv}.
\end{proof}

We apply the above result to the particular case when $G$ follows a centered chi-squared distribution.

\begin{corollary}
Consider  two sequences  $(X_k)_{k\geq 1}$ and $(Y_k) _{k\geq 1}$ as in Proposition \ref{23i-10} and assume that the random variable $G$ from Proposition \ref{23i-10} is a  centered Gamma random variable i.e $G= W(h)^2-1$ where $h\in H$, $\Vert h\Vert _H =1$.
Then the following are equivalent:
\begin{enumerate}
\item $X_{k}+  Y_{k} \to _{k} Z+ G$ in distribution.

\item $X_{k}\to _{k} Z$ and $ Y_{k} \to _{k} G$ in distribution.

\item $\frac{1}{p} \Vert  DX_{k} \Vert _H^{2} \to 1$ and $\frac{1}{q} \Vert DY_{k} \Vert _H^{2} - 2Y_{k} \to _{k} 0$  in $L^{2} (\Omega)$.

\item$1 -\frac{1}{p}\Vert DX_{k}\Vert _H^{2} + 2Y_{k} -\frac{1}{q} \Vert DY_{k} \Vert _H^{2} \to _{k} 0$ in $L^{2}(\Omega)$.

\item $\frac{1}{2} a(X_{k}+ Y_{k}) - {\bf E}\left[ \left.\frac{1}{p}\Vert  DX_{k} \Vert _H^{2} + \frac{1}{q}\Vert DY_{k} \Vert _H^{2} \right| X_{k}+ Y_{k}\right] \to _{k}0$ in $L^{1} (\Omega)$.
\end{enumerate}
\end{corollary}
\begin{proof}
The points (i) and (v) are equivalent due to Proposition \ref{23i-10} by noting the squared diffusion coefficient $a$ associated with the law $Z+ G$ satisfies the assumptions from Corollary \ref{cor:conv}. The fact that (i) is equivalent to (ii), (iii), (iv) follows from Theorem 1.2 in \cite{NoPe4} and Remark \ref{23i-8}. On the other hand,  the explicit expression for $a$ (that can be calculated via (\ref{n1}), is pretty complexe.
\end{proof}

\begin{remark}
Condition $(iv)$ above show that the convergence toward the sum $Z+G$ of a Gaussian and an independent Gamma distribution can be characterized  (without the appearance of the conditional expectation) in terms of the diffusion coefficients  associated with $Z$ and $G$.  
\end{remark}

\section{The case when  the diffusion coefficient is a polynomial of second degree}

Since we are studying the convergence of a sequence of multiple stochastic integrals, whose expectation is zero, we will assume that the measure $\mu$ is centered and the drift coefficient is $b(x)=-x$. We will also assume that the diffusion coefficient is a polynomial of second degree expressed as
\begin{equation}
\label{apol}
a(x)= \alpha x^{2}+ \beta x + \gamma, \hskip0.5cm x\in S,\ \alpha, \beta, \gamma  \in \mathbb{R}
\end{equation}
such that $a(x)>0$ for every $x\in S$.  In this case it is possible to understand better when the necessary and sufficient condition for the weak convergence of a sequence of multiple integrals  toward the law $\mu$ is satisfied.

This class contains the known continuous probability distributions. See Table 1 in \cite{BSS}. It contains among others, the normal, Gamma, uniform, Student, Pareto, inverse Gamma or F distributions. We mention by the way that, except the first laws listed here, the others cannot be limits in distribution of a sequence of multiple integrals, because they do not admit moments of any order. Nevertheless, the class satisfying (\ref{apol}) and admitting moments of any order contains an infinity of probability distributions.

We give below some examples qualified as candidate to be limit in distribution of sequences of multiple integrals. Note that in Table 1 in \cite{BSS} the diffusion coefficient is given for non centered probability distributions. In order to obtain the diffusion coefficient $a$ of the centered measure from the diffusion coefficient $a_{0}$ of the non centered measure we apply the following rule (see Lemma 2.5 in \cite{BSS})

\begin{equation*}
a(x)= a_{0} (x+ {\bf E}[X])
\end{equation*}
where $X \sim \mu$.

\begin{example}\label{normal}
{\bf The normal distribution $N(0,\gamma)$, $\gamma >0$. } In this case $a(x)= 2 \gamma$.
\end{example}

\begin{example}
\label{gamma}
{\bf The Gamma $\Gamma (a, \lambda)$, $a, \lambda >0$ law. } Here  the density is $f(x)= \frac{\lambda ^{a}}{\Gamma (a)} x^{a-1} e^{-\lambda x} $ for $x>0$ and $f(x) =0$ otherwise. Also  ${\bf E}[X] = \frac{a}{\lambda}$ and the centered Gamma law has
\begin{equation*}
a(x)= \frac{2}{\lambda} (x+ \frac{a}{\lambda})
\end{equation*}
meaning that $\alpha =0, \beta = \frac{2}{\lambda}, \gamma = \frac{2a}{\lambda ^{2}}$.
\end{example}

\begin{example}
\label{uniform}
{\bf The uniform $U(0,1)$ distribution. } Here the density is $f(x)= 1_{[0,1]}(x)$, the mean is ${\bf E}[X]= \frac{1}{2}$ and $U[0,1]-{\bf E}[U[0,1]]$ has squared diffusion coefficient 
$$a(x)= (x+ \frac{1}{2}) (\frac{1}{2} -x)= \frac{1}{4} -x^{2}.$$
So $\alpha= -1, \beta = 0, \gamma = \frac{1}{4}.$
\end{example}

\begin{example}
\label{beta}
{\bf The Beta $\beta (a, b)$ law, $a,b>0$. }
In this case the probability density function is 
\begin{equation*}
f(x)= \frac{\Gamma (a+b)} { \Gamma (a) \Gamma (b)} x^{a-1} (1-x) ^{b-1} 1_{(0,1)}(x), 
\end{equation*} ${\bf E}[X]= \frac{a}{a+b}$ and the centered  beta law has 
$$a(x)= \frac{2}{a+b} (x+ \frac{a}{a+b} ) (\frac{b}{a+b} -x).$$
Note that the beta distribution with $a=b$ has the fourth cumulant negative and therefore it cannot be limit of a sequence of multiple integrals.

Therefore $\alpha= -\frac{2}{a+b}, \beta = \frac{2}{a+b} \frac{b-a}{a+b}, \gamma = \frac{2}{a+b}\frac{a}{a+b}\frac{b}{a+b}.$

\end{example}

The polynomial form of the diffusion coefficient $a$ together with Theorem \ref{char2} implies several consequences on the moments of the probability distribution of $X\sim \mu$.  We will present them in  the following useful lemma. In particular, we give a recurrence formula  for the moments of  $X$.  

\begin{lemma}
Suppose that $a$ is given by (\ref{apol}).  Then for every $k\in \mathbb{R}$, $k\geq 1$ such that ${\bf E}[X^{2k}]<\infty$ one has
\begin{equation}
\label{monrec}
\left( 1- \frac{2k-1}{2} \alpha \right) {\bf E}[ X^{2k}] = \frac{2k-1}{2} \beta {\bf E}[X^{2k-1}] +\frac{2k-1}{2}\gamma {\bf E}[X^{2k-2}].
\end{equation}
In particular, if $\alpha \not=2$,

\begin{equation}\label{e2}
{\bf E}[X^{2}]= \frac{\gamma} {2-\alpha},
\end{equation}
if $\alpha \not= 1, 2$, 
\begin{equation}\label{e3}
{\bf E}[X^{3}] = \frac{\beta}{1-\alpha} {\bf E}[X^{2}]= \frac{ \beta \gamma }{(1-\alpha) (2-\alpha ) }
\end{equation}
and if $\alpha \not= 2, \frac{2}{3}$, then 
\begin{equation}\label{e4}
{\bf E}[X^{4}] = \frac{3 (\frac{\beta ^{2}}{1-\alpha } + \gamma )}{2-3\alpha} {\bf E}[X^{2}] = \frac{3 \gamma (\frac{\beta ^{2}}{1-\alpha } + \gamma )}{(2-\alpha) (2-3\alpha)}.
\end{equation}
\end{lemma}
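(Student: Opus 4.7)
The whole argument rests on Theorem \ref{char2}: since $b(x)=-x$, the identity (\ref{eq:char2}) reads
\[
\mathbf{E}[X h(X)] = \tfrac12\,\mathbf{E}[a(X) h'(X)]
\]
for every admissible $h\in C^{1}(S)$, and I would apply it to the test function $h(x)=x^{2k-1}$. The integrability hypotheses of Theorem \ref{char2} must first be checked: the condition $\mathbf{E}|b(X)h(X)|<\infty$ is exactly $\mathbf{E}|X|^{2k}<\infty$, which is assumed, and $\mathbf{E}|a(X)h'(X)|$ is bounded above by a linear combination of $\mathbf{E}|X|^{2k}$, $\mathbf{E}|X|^{2k-1}$, $\mathbf{E}|X|^{2k-2}$ after using $a(x)=\alpha x^{2}+\beta x+\gamma$, so all three are controlled by $\mathbf{E}X^{2k}$ via H\"older's inequality.

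Substituting $h(x)=x^{2k-1}$ and $h'(x)=(2k-1)x^{2k-2}$ into the Stein identity yields
\[
\mathbf{E}X^{2k} = \tfrac{2k-1}{2}\bigl[\alpha\,\mathbf{E}X^{2k}+\beta\,\mathbf{E}X^{2k-1}+\gamma\,\mathbf{E}X^{2k-2}\bigr],
\]
and moving the $\mathbf{E}X^{2k}$ term to the left-hand side produces exactly the recurrence (\ref{monrec}).

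The three explicit moment formulas are then obtained by specialization together with the standing hypothesis $\mathbf{E}X=0$. Taking $k=1$ (so $h(x)=x$) kills the $\beta$-term and delivers (\ref{e2}). Taking $k=\tfrac{3}{2}$ (so $h(x)=x^{2}$) kills the $\gamma$-term, produces $\mathbf{E}X^{3}=\beta(1-\alpha)^{-1}\mathbf{E}X^{2}$, and substitution of (\ref{e2}) yields (\ref{e3}). Finally, taking $k=2$ (so $h(x)=x^{3}$) expresses $\mathbf{E}X^{4}$ in terms of $\mathbf{E}X^{3}$ and $\mathbf{E}X^{2}$; plugging in the values just computed and simplifying gives (\ref{e4}). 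There is no real obstacle to this proof: once the correct test function has been identified, the argument is essentially mechanical, and only the integrability check deserves a moment's attention, which is straightforward because $a$ is a polynomial of degree two.
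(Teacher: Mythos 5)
Your proposal is correct and follows essentially the same route as the paper: apply Theorem \ref{char2} with $h(x)=x^{2k-1}$ and $b(x)=-x$ to get the recurrence, then specialize to $k=1,\tfrac32,2$ using $\mathbf{E}X=0$. The only difference is that you spell out the integrability check for the test functions, which the paper leaves implicit.
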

\begin{proof} Relation (\ref{monrec}) is obtained by applying Theorem \ref{char2} with $h(x)= x^{2k-1}$.  In particular, for $k=1$, since ${\bf E}[X]=0$
$${\bf E}[X^{2}]= {\bf E}\left[ \frac{1}{2} a(X)\right] = \frac{1}{2} \left( \alpha {\bf E}[X^{2}] + \beta {\bf E}[X] + \gamma \right) $$
and thus ${\bf E}[X^{2}]= \frac{\gamma} {2-\alpha}$.
By applying successively  Proposition \ref{char2} with $h(x)= x^{2}$ ($k=\frac{3}{2}$) and $h(x)= x^{3}$ ($k=2$) we obtain the expressions (\ref{e3}) and (\ref{e4}).
\end{proof}

\begin{lemma}\label{l2} Assume (\ref{apol}) with  $\alpha \not=1,2, \frac{2}{3}$. Fix $n\geq 1$ and let $\{ F_{m}= I_{n} (f_{m}), m\geq 1\} $ such that  
\begin{equation}
\label{m2}
{\bf E}[ F_{m} ^{2}] \to _{m\to \infty} {\bf E}[X^{2}],\quad  {\bf E}[F_{m} ^{4}] \to _{m\to \infty}{\bf E}[X^{4}] \mbox { and } {\bf E}[F_{m} ^{3}] \to _{m\to \infty} {\bf E}[X^{3}].
\end{equation}

Then 
 \begin{eqnarray}
&&{\bf E}\left[ F_{m} ^{4} -\frac{3}{2} a(F_{m}) F_{m} ^{2} \right]\nonumber\\
&\simeq & C_{0} n! \Vert f_{m} \Vert ^{2} + 3n \sum_{p=1; p\not= \frac{n}{2}} ^{n-1} (p-1) ! \left( \begin{array}{c} n-1\\ p-1\end{array}\right)^{2} p! \left( \begin{array}{c} n\\ p\end{array}\right) ^{2} (2n-2p)! \Vert f_{m} \tilde{ \otimes }_{p} f_{m} \Vert ^{2}\nonumber \\
&&+ \frac{3}{2} c_{n} ^{-2}n!  \Vert f_{m} \tilde{ \otimes }_{n/2} f_{m} \Vert ^{2} -\frac{3}{2} \beta {\bf E}[ F_{m} ^{3}] \to _{m\to \infty}0 \label{10i3}
\end{eqnarray}
where $\simeq $ means that the two sides have the same limit as $m\to \infty$, $c_{n}$ is defined by \begin{equation}
\label{cn}
c_{n}= \frac{ (n/2)! ^{3}}{n! ^{2}}
\end{equation}  
and 
\begin{eqnarray}
\label{C0}
C_{0}&=& \frac{3}{2}\alpha \left[ \frac{-4\gamma}{(2-\alpha)(2-3\alpha)} -\frac{3\beta ^{2}}{(1-\alpha) (2-3\alpha) } \right].
\end{eqnarray}

\end{lemma}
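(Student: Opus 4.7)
Since $a(x) = \alpha x^2 + \beta x + \gamma$, one has the exact identity
\[
\mathbf{E}\!\left(F_m^4 - \tfrac{3}{2}a(F_m)F_m^2\right) = \bigl(1-\tfrac{3\alpha}{2}\bigr)\mathbf{E}F_m^4 - \tfrac{3\beta}{2}\mathbf{E}F_m^3 - \tfrac{3\gamma}{2}\mathbf{E}F_m^2.
\]
Applying Theorem \ref{char2} to $X\sim \mu$ with the test function $h(x)=x^3$ yields $\mathbf{E}\!\left[\tfrac{3}{2}a(X)X^2 - X^4\right]=0$, so together with the moment convergence assumption (\ref{m2}) the above expression converges to $0$. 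This handles the leftmost term of (\ref{10i3}); what remains is to rewrite it in the claimed form.

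Next I would substitute (\ref{m4}) for $\mathbf{E}F_m^4$, using $\mathbf{E}F_m^2 = n!\|f_m\|^2$, and peel off the $p = n/2$ summand (nontrivial only when $n$ is even). After cancelling $\binom{n-1}{n/2-1}^2 \binom{n}{n/2}^2$ against the factorials and the factor of $3n$, its coefficient collapses exactly to $\tfrac{3}{2}c_n^{-2}n!$, matching the target expression. Writing $S_m := 3n\!\sum_{p\ne n/2}\!(\cdots)\|f_m\tilde\otimes_p f_m\|^2 + \tfrac{3}{2}c_n^{-2}n!\|f_m\tilde\otimes_{n/2}f_m\|^2$, the identity above becomes
\[
\bigl(1-\tfrac{3\alpha}{2}\bigr)\!\left[3(\mathbf{E}F_m^2)^2 + S_m\right] - \tfrac{3\beta}{2}\mathbf{E}F_m^3 - \tfrac{3\gamma}{2}\mathbf{E}F_m^2 \longrightarrow 0,
\]
and since $\alpha\neq 2/3$ I may solve for $S_m$ modulo $o(1)$.

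Substituting this expression for $S_m$ into the middle member of (\ref{10i3}) and regrouping, the claim reduces to checking
\[
\lim_{m\to\infty}\!\left[\Bigl(C_0 + \tfrac{3\gamma}{2-3\alpha}\Bigr)\mathbf{E}F_m^2 - 3(\mathbf{E}F_m^2)^2 + \tfrac{9\alpha\beta}{2(2-3\alpha)}\mathbf{E}F_m^3\right]=0.
\]
Inserting the explicit limits $\mathbf{E}F_m^2 \to \gamma/(2-\alpha)$ and $\mathbf{E}F_m^3 \to \beta\gamma/[(1-\alpha)(2-\alpha)]$ from (\ref{e2})--(\ref{e3}) and factoring out $\gamma/(2-\alpha)$, the bracket collapses to $\tfrac{\gamma}{2-\alpha}\bigl[C_0 + \tfrac{6\alpha\gamma}{(2-\alpha)(2-3\alpha)} + \tfrac{9\alpha\beta^2}{2(1-\alpha)(2-3\alpha)}\bigr]$, which vanishes precisely for the value of $C_0$ given in (\ref{C0}). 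The main obstacle will be this final cancellation: it only emerges after every moment has been substituted, and the exclusions $\alpha \notin \{1, 2, 2/3\}$ are used at each step to avoid the vanishing denominators in (\ref{e2})--(\ref{e3}) and in the solution for $S_m$.
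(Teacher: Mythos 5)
Your proposal is correct and follows essentially the same route as the paper: both start from $\mathbf{E}[F_m^4-\tfrac32 a(F_m)F_m^2]\to 0$ via Theorem \ref{char2} with $h(x)=x^3$, expand via the fourth-moment chaos formula (\ref{m4}), identify the $p=n/2$ coefficient as $\tfrac32 c_n^{-2}n!$, and close the computation with the explicit limits (\ref{e2})--(\ref{e4}); the final cancellation you isolate is exactly the identity defining $C_0$. The only difference is cosmetic bookkeeping (you solve for the contraction sum and substitute back, while the paper splits $(1-\tfrac32\alpha)\mathbf{E}F_m^4$ and replaces one copy by the chaos formula and the other by its limit), and your verification of the combinatorial coefficient is a detail the paper leaves implicit.
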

\begin{proof}
From assumption (\ref{m2}) and Theorem \ref{char2},
$$ {\bf E}\left[  F_{m} ^{4} -\frac{3}{2} a(F_{m}) F_{m} ^{2} \right] \to_{m} {\bf E}\left[ X^{4} -\frac{3}{2} a(X) X^{2}\right] =0.$$

Now,  by (\ref{apol})
\begin{eqnarray*}
 \mathbf{E} \left[ F_{m} ^{4} -\frac{3}{2} a(F_{m}) F_{m} ^{2} \right]
&=& {\bf E}[F_{m} ^{4}] -\frac{3}{2} \alpha {\bf E}[F_{m}^{4}] - \frac{3}{2} \gamma {\bf E}[F_{m}^{2}] -\frac{3}{2} \beta {\bf E}[F_{m} ^{3}]\\
 \end{eqnarray*}
and, on the other hand, using (\ref{m2}), (\ref{e4}) and (\ref{m4}) we get
\begin{eqnarray*}
&& \left( 1-\frac{3}{2}\alpha \right) {\bf E}[ F_{m} ^{4}] -\frac{3}{2} \gamma {\bf E}[F_{m} ^{2}] \\
&&\simeq  3 {\bf E}[F_{m} ^{2}] ^{2} + 3n \sum_{p=1} ^{n-1} (p-1) ! \left( \begin{array}{c} n-1\\ p-1\end{array}\right)^{2} p! \left( \begin{array}{c} n\\ p\end{array}\right) ^{2} (2n-2p)! \Vert f_{m} \tilde{ \otimes }_{p} f_{m} \Vert ^{2}\\
&&-\frac{3}{2}\alpha \frac {3(\frac { \beta ^{2}}{1-\alpha} + \gamma )}{2-3\alpha } {\bf E}[F_{m} ^{2}]  -\frac{3}{2} \gamma {\bf E}[F_{m} ^{2}]\\
&&\simeq C_{0}{\bf E}[F_{m} ^{2}] +  3n \sum_{p=1} ^{n-1} (p-1) ! \left( \begin{array}{c} n-1\\ p-1\end{array}\right)^{2} p! \left( \begin{array}{c} n\\ p\end{array}\right) ^{2} (2n-2p)! \Vert f_{m} \tilde{ \otimes }_{p} f_{m} \Vert ^{2}
\end{eqnarray*}
with $C_{0}$ given by (\ref{C0}). Therefore, the desired relation (\ref{10i3}) is obtained.
\end{proof}

 From the above result we will deduce several restrictions on the probability distributions that can be limits of sequences of multiple stochastic integrals. We will discuss separately the cases $\beta=0$ and $\beta \not=0$.

\subsection{The case $\beta =0$}

Let us assume that $\beta =0$ in (\ref{apol}).  This is the case of the Student, uniform and beta (with parameters $a=b$) distributions.  In this situation, since $\mathbf{E}X^{3} =0$ (see (\ref{e3})), the order of the chaos  $n$ can be even or odd in principle.

\begin{theorem}\label{10i2}  Assume (\ref{apol}) with  $\alpha \not=2, \frac{2}{3}$ and $\beta =0$.   Fix $n\geq 1$ and let $( F_{m}= I_{n} (f_{m})_{ m\geq 1} $ satisfying (\ref{m2}). Then $\alpha =0, \gamma >0$ and  if $S=(-\infty, \infty)$, $X$ follows a  centered normal distribution with variance $\gamma$. 

\end{theorem}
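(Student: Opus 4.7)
The plan is to extract a sign constraint on $\alpha$ from Lemma \ref{l2} combined with the moment identities of $\mu$ implied by Theorem \ref{char2}, and then to close the remaining subinterval via Wiener-chaos hypercontractivity. First, applying Theorem \ref{char2} with the test function $h(x)=x^2$ and using $\mathbf{E}X=0$ and $\beta=0$, one has $(\alpha-1)\mathbf{E}X^3=0$; the value $\alpha=1$ would force $\mathbf{E}X^4=3\gamma^2/((2-\alpha)(2-3\alpha))$ to be negative, so in fact $\alpha\neq 1$ and hence $\mathbf{E}X^3=0$. The term $-\frac{3}{2}\beta\mathbf{E}F_m^3$ in (\ref{10i3}) therefore vanishes, and a direct computation shows that the coefficient $\frac{3}{2}c_n^{-2}n!$ of $\|f_m\tilde\otimes_{n/2}f_m\|^2$ in (\ref{10i3}) coincides with the $p=n/2$ coefficient inside (\ref{m4}). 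Passing to the limit in Lemma \ref{l2} then produces
\begin{equation*}
0 \;=\; C_0\,\mathbf{E}X^2 \;+\; 3n\sum_{p=1}^{n-1}(p-1)!\binom{n-1}{p-1}^{2}p!\binom{n}{p}^{2}(2n-2p)!\lim_{m\to\infty}\|f_m\tilde\otimes_p f_m\|^2,
\end{equation*}
with strictly positive coefficients and non-negative contraction norms, so $C_0\,\mathbf{E}X^2\leq 0$ and hence $C_0\leq 0$.

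Next I would carry out a case analysis on the sign of $C_0=-6\alpha\gamma/((2-\alpha)(2-3\alpha))$ (the value of (\ref{C0}) at $\beta=0$). The positivity of $\mathbf{E}X^2=\gamma/(2-\alpha)$ and of $\mathbf{E}X^4$ force $(2-\alpha)(2-3\alpha)>0$, i.e.\ $\alpha<2/3$ or $\alpha>2$, together with $\gamma$ and $2-\alpha$ of the same sign; inspection then gives $C_0>0$ both for $\alpha<0$ (where $\gamma>0$) and for $\alpha>2$ (where $\gamma<0$), which is incompatible with $C_0\leq 0$. Combined with the excluded value $\alpha=2/3$, this narrows the admissible range to $\alpha\in[0,2/3)$. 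The subinterval $(0,2/3)$ is not ruled out by Lemma \ref{l2} alone, since its identity is algebraically consistent with heavy-tailed Student-type targets (as one may verify by explicit moment matching in the second chaos); an extra ingredient is needed.

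The hard part will be the case $\alpha\in(0,2/3)$, which I would dispose of by hypercontractivity of the $n$-th Wiener chaos (Proposition~2.14 of \cite{Sh}, already invoked for Proposition \ref{prop:4momG}). Boundedness of $\mathbf{E}F_m^2$ from (\ref{m2}) gives $\sup_m\mathbf{E}|F_m|^p<\infty$ for every $p\geq 1$; hence $\{F_m\}$ is tight and any subsequential distributional limit $Y$ has finite moments of all orders with $\mathbf{E}Y^k=\lim_m\mathbf{E}F_m^k=\mathbf{E}X^k$ for $k=2,3,4$. On the other hand, iterating the recurrence (\ref{monrec}) with $\beta=0$ shows that whenever $\alpha>0$ there exists $k$ with $2k-1>2/\alpha$ for which $\mathbf{E}X^{2k}=+\infty$; in the natural setting where (\ref{m2}) arises from distributional convergence of $F_m$ to $X$ (as in Proposition \ref{prop:4momG}), identifying $Y$ with $X$ contradicts $\mathbf{E}Y^{2k}<\infty$. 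This forces $\alpha=0$; the diffusion coefficient then reduces to the positive constant $\gamma>0$, and the resulting SDE is an Ornstein--Uhlenbeck equation whose invariant measure is the claimed centered Gaussian.
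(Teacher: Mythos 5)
Your first two paragraphs are correct and, modulo a reorganization, recover the paper's own argument. The paper likewise passes to the limit in Lemma \ref{l2} to get $C_{0}\mathbf{E}X^{2}+\mbox{(nonnegative contraction terms)}=0$, and likewise uses $\mathbf{E}X^{2}=\gamma/(2-\alpha)>0$ and $\mathbf{E}X^{4}>0$ to force $(2-\alpha)(2-3\alpha)>0$ and to tie the sign of $\gamma$ to that of $2-\alpha$; the paper phrases the conclusion as ``$C_{0}\geq 0$, hence $C_{0}=0$'' where you argue ``$C_{0}\leq 0$ plus sign inspection,'' but on $\alpha\in(-\infty,0)\cup(\frac23,\infty)$ the content is the same. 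Your check that the $p=n/2$ coefficient in (\ref{m4}) equals $\frac32 c_{n}^{-2}n!$ is also correct, and ruling out $\alpha=1$ (needed to quote Lemma \ref{l2} verbatim) is a reasonable piece of added care.

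The divergence, and the genuine gap, is the case $\alpha\in(0,\frac23)$. Theorem \ref{10i2} assumes only the moment convergence (\ref{m2}); it does not assume that $F_{m}$ converges to $X$ in distribution. Your hypercontractivity argument must identify a subsequential weak limit $Y$ of $(F_{m})$ with $X$ before the infinitude of $\mathbf{E}X^{2k}$ can produce a contradiction, and you concede this yourself (``in the natural setting where (\ref{m2}) arises from distributional convergence''). As written, you have therefore proved a statement with a strictly stronger hypothesis than the one in the theorem. The paper instead disposes of $\alpha>0$ by iterating (\ref{monrec}): for large $k$ the factor $1-\frac{2k-1}{2}\alpha$ turns negative, which is read as forcing $\gamma<0$, whence $\alpha\gamma\le 0$ and $C_{0}\ge 0$ in all cases. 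You should know, though, that your diagnosis of the difficulty is sharp: that iteration tacitly presupposes $\mathbf{E}X^{2k}<\infty$ for every $k$, which is precisely what fails for the Student-type targets with $\alpha\in(0,\frac23)$ (there the recurrence shows the high moments are infinite rather than that $\gamma<0$), and your second-chaos moment-matching remark indicates that (\ref{m2}) alone cannot separate such a target from an admissible chaotic sequence. So the proposal is incomplete as a proof of the theorem as stated, but it correctly isolates the one step where the argument --- the paper's included --- needs either full weak convergence or some substitute for the missing higher moments.
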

\begin{proof}
Let us show that $C_{0} \geq 0$.  Note that
$$ C_{0}= -\frac{6\alpha  \gamma}{(2-\alpha)(2-3\alpha) }.$$
 Relations (\ref{e2}) and (\ref{e4}) with $\beta =0$ give
\begin{equation*}
{\bf E}[X ^{2}]= \frac{\gamma}{2-\alpha} \mbox{ and } {\bf E}[X^{4}] = \frac{3\gamma ^{2} }{(2-\alpha)(2-3\alpha)}
\end{equation*}
and this implies in  particular
$$(2-\alpha) (2-3\alpha) >0.$$

On the other hand,  from the relation (\ref{monrec}) with $\beta =0$ we notice that
$$
\left( 1 -\frac{2k-1}{2} \alpha \right) {\bf E}[X^{2k}]= \frac{2k-1}{2} \gamma {\bf E}[X ^{2k-2}]$$
for every $k\geq 1$ we notice that $\alpha , \gamma $ have different  parities. Indeed, if $\alpha >0$ then for $k$ large enough $\left( 1 -\frac{2k-1}{2} \alpha \right)$ becomes negative and thus $\gamma <0$. If $\alpha <0$ then $\left( 1 -\frac{2k-1}{2} \alpha \right) $ is positive and so $\gamma $ should be positive. So $C_{0} \geq 0$.

Next, since  ${\bf E}\left[ F_m^{4} -\frac{3}{2} a(F_{m} ) F_{m} ^{2} \right] \to _{m} 0$ we have from  Lemma \ref{l2}
$$C_{0} {\bf E}[F_{m} ^{2}] + 3n \sum_{p=1} ^{n-1} (p-1) ! \left( \begin{array}{c} n-1\\ p-1\end{array}\right)^{2} p! \left( \begin{array}{c} n\\ p\end{array}\right) ^{2} (2n-2p)! \Vert f_{m} \tilde{ \otimes }_{p} f_{m} \Vert ^{2}\to _{m\to \infty} 0 $$
and this is not possible unless $C_{0}=0.$ In this case, we have $\alpha= 0$ (from (\ref{C0})), $\gamma >0$ (from (\ref{e2})) and consequently, when $S=(-\infty, \infty)$,     $X$ follows a normal distribution with mean zero and variance $\gamma$. This can be easily obtained by computing the density of $X$ using Proposition 1 in \cite{KuTu}.

\end{proof}

As a consequence, we notice that several probability distributions cannot be limits in distribution of sequences of multiple stochastic integrals. 
\begin{corollary}
A sequence of random variables in a fixed Wiener chaos cannot converge to the uniform distribution.
\end{corollary}

\subsection{The case $\beta \not =0$}
In this paragraph we study the convergence of a sequence a multiple stochastic integrals to the law of a random variable $X\sim \mu$ where $\mu$ is the invariant measure of a diffusion with drift coefficient $b(x)=-x$ (meaning that ${\bf E}[X]=0$) and diffusion coefficient $a$ given by (\ref{apol}) with $\beta \not =0$. This is the case of the  Pareto, Gamma, inverse Gamma and $F$ distributions.

Fix $n\geq 1$. Consider throughout this section that  $\{ F_{m}, m\geq 1\}$ is a sequence of random variables expressed as $F_{m} = I_{n} (f_{m})$ with $f_{m} \in {H} ^{\odot n}$.
Since the third moment of a multiple Wiener -It\^o integral of odd order is zero, from (\ref{e3}) we may  assume  in this paragraph that {\bf $n$ is even. }

Let us first deduce some consequences on the convergence in law of $F_{m}$ to $X$.

\begin{lemma}\label{l11}
 Assume (\ref{apol}) with  $\alpha \not=1, 2, \frac{2}{3}$.   Fix $n\geq 1$ and let $\{ F_{m}= I_{n} (f_{m}), m\geq 1\} $ satisfying (\ref{m2}). Then
\begin{equation*}
\lim _{m} \langle f_{m}, f_{m} \tilde{ \otimes } _{n/2}f_{m} \rangle =\lim _{m} \frac{\beta}{1-\alpha } c_{n}  \Vert f_{m} \Vert ^{2}
\end{equation*}
where $c_{n}$ is given by (\ref{cn}). 
\end{lemma}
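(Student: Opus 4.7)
The plan is to prove the lemma by direct computation, matching the explicit third-moment formula for a chaos element (Lemma \ref{8i1}) with the third moment of the target law computed in (\ref{e3}), and then using the isometry together with (\ref{e2}) to rewrite the resulting limit in terms of $\|f_m\|^2$.

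First I would use the fact that $n$ is even, so that Lemma \ref{8i1} gives
\[
\mathbf{E}F_m^3 \;=\; \frac{n!^{3}}{(n/2)!^{3}}\,\langle f_m,\,f_m\,\tilde{\otimes}_{n/2}\,f_m\rangle \;=\; c_n^{-1}\, n!\, \langle f_m,\,f_m\,\tilde{\otimes}_{n/2}\,f_m\rangle,
\]
where in the last equality I used the definition (\ref{cn}) of $c_n$. By the assumption (\ref{m2}) on the convergence of the third moment and the explicit formula (\ref{e3}) (which is applicable since $\alpha \neq 1,2$), this yields
\[
\lim_{m\to\infty} \langle f_m,\, f_m\,\tilde{\otimes}_{n/2}\,f_m\rangle \;=\; \frac{c_n}{n!}\cdot \frac{\beta\gamma}{(1-\alpha)(2-\alpha)}.
\]

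Next I would compute the right-hand side of the claimed identity using the isometry $\mathbf{E}F_m^2 = n!\,\|f_m\|^2$, the assumption on the convergence of the second moment in (\ref{m2}), and formula (\ref{e2}) (applicable since $\alpha\neq 2$). This gives $\|f_m\|^2 \to \frac{\gamma}{n!(2-\alpha)}$, so
\[
\lim_{m\to\infty}\frac{\beta}{1-\alpha}\, c_n\, \|f_m\|^2 \;=\; \frac{\beta}{1-\alpha}\cdot c_n\cdot \frac{\gamma}{n!(2-\alpha)} \;=\; \frac{c_n}{n!}\cdot \frac{\beta\gamma}{(1-\alpha)(2-\alpha)}.
\]
Comparing the two displayed limits yields the desired equality.

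There is essentially no obstacle here: the statement is really just a bookkeeping identity linking the third-moment Malliavin representation (\ref{m3}) with the moment recursion (\ref{monrec}) for $\mu$. The role of the hypotheses $\alpha\neq 1,2,\tfrac{2}{3}$ is purely to guarantee that the explicit formulas (\ref{e2})--(\ref{e4}) for $\mathbf{E}X^{2}$, $\mathbf{E}X^{3}$ (only these two are actually used in the proof, the condition $\alpha\neq \tfrac{2}{3}$ being inherited from the ambient assumptions of the section) are finite and well defined. No contraction estimate or Stein-type bound is needed for the proof of this lemma itself; it is a one-line consequence of Lemmas \ref{8i1} and the moment formulas, and it serves as a preparatory identity for the subsequent analysis of the case $\beta\neq 0$.
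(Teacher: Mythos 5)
Your proof is correct and uses essentially the same ingredients as the paper's: formula (\ref{m3}) for $\mathbf{E}F_m^3$, the isometry $\mathbf{E}F_m^2=n!\|f_m\|^2$, and the moment identities (\ref{e2})--(\ref{e3}) derived from Theorem \ref{char2}; the only cosmetic difference is that you compute the two limits separately and explicitly, whereas the paper shows the combination $(1-\alpha)\mathbf{E}F_m^3-\beta\mathbf{E}F_m^2\to 0$ directly. Your side remarks (that $n$ must be even and that $\alpha\neq\frac{2}{3}$ is not actually needed here) are also accurate.
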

\begin{proof}
Condition (\ref{m2}) and  Proposition \ref{char2} imply 
\begin{equation*}
\mathbf{E} \left[ F_{m} ^{3} -a(F_{m} ) F_{m} \right] \to _{m\to \infty }{\bf E}[ X^{3} -a(X)X] =0
\end{equation*}
or equivalently
\begin{equation*}
(1-\alpha) {\bf E}[F_{m} ^{3}] -\beta {\bf E}[F_{m} ^{2}] \to _{m} 0.
\end{equation*}
But from (\ref{m3}) we have
\begin{eqnarray*}
(1-\alpha) {\bf E}[F_{m} ^{3}] -\beta {\bf E}[F_{m} ^{2}]
&=& (1-\alpha)  \frac{n! ^{3}}{(n/2) ! ^{3}}\langle f_{m}, f_{m }\tilde{\otimes} _{\frac{n}{2}} f_{m} \rangle -\beta n!  \Vert f_{m} \Vert ^{2}\\
&= &    \frac{n! ^{3}}{(n/2) ! ^{3}} \left[ (1-\alpha) \langle f_{m}, f_{m }\tilde{\otimes }_{\frac{n}{2}}f_{m} \rangle - \beta c_{n}  \Vert f_{m} \Vert ^{2}\right]
\end{eqnarray*}
and since this converges to zero as $m\to \infty$, we obtain the conclusion. Let us also mention that the two limits in the statement of the lemma exist due to   Lemma \ref{8i1} and to the convergence of the sequences of the second and third moments of $(F_{m}) _{ m\geq 1}$. 
\end{proof}

We will need the following auxiliary lemma.

\begin{lemma}\label{l22} Let the assumptions stated in Lemma \ref{l11} prevail.  
Then for every $c \in \mathbb{R}$ 
\begin{eqnarray}
&&\mathbf{E}\left[ F_{m} ^{4} -\frac{3}{2} a(F_{m}) F_{m} ^{2} \right]\nonumber\\
&\simeq &  \frac{3}{2} c_{n} ^{-2} n! \left[ \frac{2}{3}c_{n}^{2}\left( C_{0} -\frac{3}{2} (1-c)  \frac{\beta ^{2}}{1-\alpha } \right) \Vert f_{m} \Vert ^{2} -\beta c c_{n}\langle f_{m}, f_{m} \tilde{\otimes}_{\frac{n}{2}} f_{m} \rangle+ \Vert f_{m} \tilde{ \otimes }_{n/2} f_{m} \Vert ^{2}\right] \nonumber\\
&&+3n \sum_{p=1; p\not= \frac{n}{2}} ^{n-1} (p-1) ! \left( \begin{array}{c} n-1\\ p-1\end{array}\right)^{2} p! \left( \begin{array}{c} n\\ p\end{array}\right) ^{2} (2n-2p)! \Vert f_{m} \tilde{ \otimes }_{p} f_{m} \Vert ^{2}\nonumber \\
&&\to_{m\to \infty } 0 \label{10i5}
\end{eqnarray}
where $\simeq $ means that the two sides have the same limit as $m\to \infty$, $c_{n}, C_{0}$ are defined by (\ref{cn})  and  (\ref{C0}) respectively.
\end{lemma}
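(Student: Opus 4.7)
The plan is to derive the stated asymptotic from Lemma \ref{l2} by using Lemma \ref{l11} to trade a controlled portion of the third-moment term for an expression in $\|f_m\|^2$, indexed by the free parameter $c$. Since $\mathbf{E}(F_m^4-\tfrac{3}{2}a(F_m)F_m^2)\to 0$ by the argument at the start of the proof of Lemma \ref{l2} (this uses Theorem \ref{char2} with $h(x)=x^3$ and the uniform integrability that follows from $F_m$ lying in a fixed chaos together with the convergence of second moments), we only need to identify the right algebraic rearrangement of the quantity appearing on the right-hand side of \eqref{10i3}.

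First, I would recall from Lemma \ref{l2} the asymptotic identity
\begin{equation*}
\mathbf{E}\!\left( F_m^4-\tfrac{3}{2}a(F_m)F_m^2\right)\simeq C_0 n!\,\|f_m\|^2 + 3n\!\!\sum_{p=1,\,p\neq n/2}^{n-1}\!\!(p-1)!\binom{n-1}{p-1}^2 p!\binom{n}{p}^2 (2n-2p)!\,\|f_m\,\tilde\otimes_p f_m\|^2 + \tfrac{3}{2}c_n^{-2}n!\,\|f_m\,\tilde\otimes_{n/2} f_m\|^2 - \tfrac{3}{2}\beta\,\mathbf{E}F_m^3.
\end{equation*}
Next, I would use the explicit chaos expression \eqref{m3} for the third moment, namely $\mathbf{E}F_m^3=\frac{n!^3}{(n/2)!^3}\langle f_m,f_m\,\tilde\otimes_{n/2} f_m\rangle = n!\,c_n^{-1}\langle f_m,f_m\,\tilde\otimes_{n/2} f_m\rangle$ (recall $c_n=(n/2)!^3/n!^2$).

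The key trick is the trivial split $\mathbf{E}F_m^3 = c\,\mathbf{E}F_m^3 + (1-c)\,\mathbf{E}F_m^3$ for the free parameter $c\in\mathbb{R}$. For the $c$-piece I keep it in the chaos form $n!\,c_n^{-1}\langle f_m,f_m\,\tilde\otimes_{n/2} f_m\rangle$. For the $(1-c)$-piece I apply Lemma \ref{l11}, which under the hypothesis $\alpha\neq 1,2,2/3$ gives
\begin{equation*}
\mathbf{E}F_m^3 \simeq n!\,c_n^{-1}\cdot\frac{\beta}{1-\alpha}\,c_n\,\|f_m\|^2 = \frac{\beta}{1-\alpha}\,n!\,\|f_m\|^2,
\end{equation*}
so $(1-c)\mathbf{E}F_m^3\simeq (1-c)\tfrac{\beta}{1-\alpha}n!\,\|f_m\|^2$. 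Substituting the two pieces back, the $\|f_m\|^2$ coefficient becomes $C_0-\tfrac{3}{2}(1-c)\tfrac{\beta^2}{1-\alpha}$ and the new $\langle f_m,f_m\,\tilde\otimes_{n/2}f_m\rangle$ contribution is $-\tfrac{3}{2}\beta c\,n!\,c_n^{-1}$.

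Finally, I would factor the common prefactor $\tfrac{3}{2}c_n^{-2}n!$ out of the terms involving $\|f_m\|^2$, $\langle f_m,f_m\,\tilde\otimes_{n/2}f_m\rangle$ and $\|f_m\,\tilde\otimes_{n/2}f_m\|^2$: the coefficient of $\|f_m\|^2$ becomes $\tfrac{2}{3}c_n^2\bigl(C_0-\tfrac{3}{2}(1-c)\tfrac{\beta^2}{1-\alpha}\bigr)$, the coefficient of $\langle f_m,f_m\,\tilde\otimes_{n/2}f_m\rangle$ becomes $-\beta c c_n$, and the coefficient of $\|f_m\,\tilde\otimes_{n/2}f_m\|^2$ becomes $1$. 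This yields exactly the bracketed expression in \eqref{10i5}, with the sum over $p\neq n/2$ left untouched. There is no serious obstacle: the proof is essentially algebraic bookkeeping, and the only non-trivial input beyond Lemma \ref{l2} is the asymptotic identity from Lemma \ref{l11} used to trade the $(1-c)$-fraction of $\mathbf{E}F_m^3$ for a multiple of $\|f_m\|^2$.
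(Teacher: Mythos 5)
Your proof is correct and follows essentially the same route as the paper: both start from \eqref{10i3}, split $\tfrac{3}{2}\beta\,\mathbf{E}F_m^3$ into a $c$-piece kept in the chaos form $n!\,c_n^{-1}\langle f_m, f_m\,\tilde{\otimes}_{n/2} f_m\rangle$ and a $(1-c)$-piece traded for $\tfrac{\beta}{1-\alpha}n!\,\Vert f_m\Vert^2$, and then regroup. The only cosmetic difference is that you invoke Lemma \ref{l11} for that trade, whereas the paper uses \eqref{e3} together with the moment convergence \eqref{m2}; these are equivalent.
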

\begin{proof} Recall the relation (\ref{10i3}). For every $c\in \mathbb{R}$ we can write 

$$\frac{3}{2} \beta {\bf E}[F_{m}^3] =\frac{3}{2}(1-c) \beta {\bf E}[F_{m} ^{3}] + \frac{3}{2}c \beta {\bf E}[F_{m} ^{3}]$$
and using (\ref{e3}) and the convergence of the moments of $F_{m}$ to those of  $\mu$ (relation (\ref{m2})), 
\begin{eqnarray}
&&\frac{3}{2}(1-c) \beta {\bf E}[F_{m} ^{3}] \simeq \frac{3}{2}(1-c) \beta \frac{\beta}{1-\alpha} {\bf E}[X^{2}] \nonumber\\
&\simeq &\frac{3}{2}(1-c) \beta \frac{\beta}{1-\alpha} {\bf E}[F_{m}^{2}] =\frac{3}{2}(1-c) \beta \frac{\beta}{1-\alpha}  n! \Vert f_{m} \Vert ^{2}.\label{10i4}
\end{eqnarray}
By combining (\ref{10i3}) and (\ref{10i4}), we obtain (\ref{10i5}).

\end{proof}

The next step is to find $c\in \mathbb{R}$ such that 
$$\frac{2}{3}\left( C_{0} -\frac{3}{2} (1-c)  \frac{\beta ^{2}}{1-\alpha } \right) = A^{2} \mbox { and } \beta c=2A.$$
If such $c\in \mathbb{R}$ exists then the sequence ${\bf E}[ F_{m} ^{4} -\frac{3}{2} a(F_{m}) F_{m} ^{2} ]$ (which converges to zero as $m\to \infty$ due to (\ref{eq:char2})) will have the same limit as $\Vert Ac_n f_{m}- f_{m} \tilde{\otimes}_{n/2} f_{m}\Vert ^{2}$ plus a positive term (this is consequence of (\ref{10i5})). 
The existence of $c\in \mathbb{R}$ is equivalent to the existence of a real solution to the second degree  equation  
\begin{equation}
\label{ec}
3\beta ^{2} c ^{2} - \frac{12 \beta ^{2} }{1-\alpha } c -\left( 8C_{0} -\frac{12\beta ^{2}}{1-\alpha }\right)=0
\end{equation}
which has the following discriminant
\begin{equation}
\label{D}
\Delta= -\frac{144 \alpha}{2-3\alpha} \left[ \frac{\beta ^{2} }{(1-\alpha ) ^{2}} + \frac{2\gamma} {2-\alpha} \right]. 
\end{equation}
Since $ \frac{\beta ^{2} }{(1-\alpha ) ^{2}} + \frac{2\gamma} {2-\alpha} >0$ (see (\ref{e2})) the sign of $\Delta $ depends on the sign of $\frac{\alpha}{2-3\alpha}$.

\begin{theorem}\label{prop:not0}
Assume (\ref{apol}) with  $\alpha \not=1, 2, \frac{2}{3}$.   Fix $n\geq 1$ and let $\{ F_{m}= I_{n} (f_{m}), m\geq 1\} $ satisfying (\ref{m2}).
Moreover, let us assume $\frac{\alpha}{2-3\alpha} \leq 0$ (that is, $\alpha \in \mathbb{R}\setminus (0, \frac{2}{3}])$. Then $\alpha =0$ and, if $S=(-\frac{a}{\lambda}, \infty)$,  $X$ follows a centered Gamma law  $\Gamma (a, \lambda) - {\bf E}[\Gamma (a, \lambda)]$ where  $\beta=\frac{2}{\lambda}, \gamma = \frac{2a}{\lambda ^{2}}$.

\end{theorem}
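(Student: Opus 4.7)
The plan is to exploit the decomposition in Lemma \ref{l22} together with the sign assumption $\frac{\alpha}{2-3\alpha}\leq 0$. First, since
$\frac{\beta^{2}}{(1-\alpha)^{2}}+\frac{2\gamma}{2-\alpha}=\frac{\beta^{2}}{(1-\alpha)^{2}}+2\mathbf{E}X^{2}>0$, the hypothesis makes the discriminant $\Delta$ in (\ref{D}) non-negative, so the quadratic equation (\ref{ec}) admits at least one real root $c_{\ast}$. Set $A_{\ast}:=\beta c_{\ast}/2$; by the very construction of (\ref{ec}), the bracket appearing in (\ref{10i5}) with $c=c_{\ast}$ is the perfect square $\|A_{\ast}c_{n}f_{m}-f_{m}\tilde{\otimes}_{n/2}f_{m}\|^{2}$.

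By Lemma \ref{l22} the left-hand side of (\ref{10i5}) converges to zero, and since all the summands $\|f_{m}\tilde{\otimes}_{p}f_{m}\|^{2}$ for $p\neq n/2$ are non-negative, each of them must vanish in the limit; in particular
\[
\lim_{m\to\infty}\|A_{\ast}c_{n}f_{m}-f_{m}\tilde{\otimes}_{n/2}f_{m}\|=0.
\]
Taking the inner product with $f_{m}$, using Cauchy--Schwarz and the boundedness of $\|f_{m}\|$ provided by (\ref{m2}), I obtain
\[
A_{\ast}c_{n}\|f_{m}\|^{2}-\langle f_{m},f_{m}\tilde{\otimes}_{n/2}f_{m}\rangle\to 0\qquad\text{as }m\to\infty.
\]
Lemma \ref{l11} then yields $A_{\ast}c_{n}\lim\|f_{m}\|^{2}=\tfrac{\beta}{1-\alpha}c_{n}\lim\|f_{m}\|^{2}$, and since $\lim\|f_{m}\|^{2}=\mathbf{E}X^{2}/n!>0$ (by the non-degeneracy of $\mu$) I conclude $A_{\ast}=\tfrac{\beta}{1-\alpha}$, hence $c_{\ast}=\tfrac{2}{1-\alpha}$.

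The decisive observation is that the argument above applies to \emph{any} real root of (\ref{ec}). Consequently, if (\ref{ec}) possessed two distinct real roots, both would have to equal $\tfrac{2}{1-\alpha}$, a contradiction. Therefore $\Delta=0$, and in view of (\ref{D}) together with the strict positivity of $\tfrac{\beta^{2}}{(1-\alpha)^{2}}+\tfrac{2\gamma}{2-\alpha}$ this forces $\alpha=0$. With $\alpha=0$ and $\beta\neq 0$, the diffusion coefficient reads $a(x)=\beta x+\gamma$; from (\ref{e2}) one has $\gamma=2\mathbf{E}X^{2}>0$, and matching with Example \ref{gamma} identifies $X$ as a centered Gamma law $\Gamma(a,\lambda)-\mathbf{E}\Gamma(a,\lambda)$ with $\lambda=2/\beta$ and $a=2\gamma/\beta^{2}$.

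I expect the main obstacle to be the extraction of the exact value of $A_{\ast}$: it is essential to pair the perfect-square limit against $f_{m}$ and to invoke Lemma \ref{l11} to pin down $A_{\ast}=\tfrac{\beta}{1-\alpha}$, and then to exploit the fact that this constraint must hold for \emph{every} real root of (\ref{ec}) in order to force $\Delta=0$, from which $\alpha=0$ and the Gamma identification follow algebraically.
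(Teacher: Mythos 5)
Your proposal is correct and follows essentially the same route as the paper's proof: completing the square in Lemma \ref{l22} via a real root of (\ref{ec}), deducing $\|A_{\ast}c_{n}f_{m}-f_{m}\tilde{\otimes}_{n/2}f_{m}\|\to 0$ from the nonnegativity of the remaining terms, identifying $A_{\ast}=\beta/(1-\alpha)$ through Lemma \ref{l11}, and concluding $\Delta=0$, hence $\alpha=0$, because every real root is forced to the same value. The only difference is that you spell out (via pairing with $f_{m}$ and Cauchy--Schwarz) the step the paper declares ``immediate,'' which is a welcome clarification rather than a new idea.
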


\begin{proof} When $\alpha \notin [0, \frac{2}{3})$, then the discriminant $\Delta $ (\ref{D}) is positive and  the equation (\ref{ec}) admits two real solutions (that may coincide).  Let us denote by $c_{1}, c_{2} \in \mathbb{R} $ and by $A_{i}= \frac{\beta c_{i}}{2}$, $i=1,2$. From (\ref{10i5}) we deduce that, for $i=1,2$
\begin{eqnarray*}
{\bf E}\left[ F_{m} ^{4} -\frac{3}{2} a(F_{m}) F_{m} ^{2} \right] \simeq  \frac 32 c_n^{-2}n! \left| \left|  A_{i} c_{n}  f_{m} -f_{m} \tilde{ \otimes }_{n/2} f_{m}\right| \right| ^{2} \\
+ 3n \sum_{p=1; p\not= \frac{n}{2}} ^{n-1} (p-1) ! \left( \begin{array}{c} n-1\\ p-1\end{array}\right)^{2} p! \left( \begin{array}{c} n\\ p\end{array}\right) ^{2} (2n-2p)!\Vert f_{m} \tilde{ \otimes }_{p} f_{m} \Vert ^{2}\to _{m} 0
\end{eqnarray*}
and consequently 
\begin{equation}
\label{aux} \Vert A _{i} c_{n}f_{m} - f_{m} \tilde{\otimes}  _{n/2} f_{m} \Vert \to _{m} 0 \mbox{ and } \Vert f_{m} \otimes _{p} f_{m} \Vert \to _{m\to \infty} 0 \mbox{ for $p=1,..., n-1$, $p\not=\frac{n}{2}$}
\end{equation}
where $c_{n}$ is given by (\ref{cn}). Relation (\ref{aux}) and Lemma  \ref{l11} immediately imply $A_{i}= \frac{\beta}{1-\alpha }$ for $i=1,2$ and consequently the two solutions to (\ref{ec}) must coincide. The discriminant $\Delta $ (\ref{D}) then vanishes and that gives $\alpha =0$.  The fact that $\mu$ is a centered Gamma law follows from  (\ref{apol}) with $\alpha=0$ and $\beta \not=0$ by computing the density of $X$ in terms of the squared diffusion coefficient $a$, see Proposition 1 in \cite{KuTu}.

\end{proof}

Since in the case of the beta distribution $\alpha=\frac{-2}{a+b}$ and $a,b>0$, we have the following corollary.

\begin{corollary}
A sequence of random variables in a fixed Wiener chaos cannot converge to the beta distribution.
\end{corollary}

\begin{remark}
In the case of the centered Gamma distribution, we obtain from the proof of Theorem  \ref{prop:not0} :  a sequence $(F_{m}= I_{n}(f_{m}) _{m\geq 1}$ such that ${\bf E}[F_{m}^{2}] \to _{m\to \infty} \frac{a}{\lambda ^{2}}$ converges to the centered Gamma law $\Gamma (a, \lambda)- {\bf E}[\Gamma (a, \lambda)]$ if and only if the following assertions are satisfied:
\begin{description}
\item{$\bullet$ } ${\bf E}[F_{m}^{3}] \to _{m} \frac{2a}{\lambda ^{3}}$ and ${\bf E}[F_{m} ^{4}] \to _{m} \frac{3a(a+2)} {\lambda ^{4} }$
\item{$\bullet$ } $\Vert  \frac{2}{\lambda} c_{n} f_{m} -f_{m}\tilde{\otimes } _{n/2} f_{m} \Vert \to _{m} 0$ (recall that $c_{n}$ is given by (\ref{cn}))
\item{$\bullet$ } $\frac{1}{\lambda} F_{m} ^{2} + \frac{a}{\lambda ^{2}} -\frac{1}{n}\Vert DF_{m}\Vert  _{H}^{2} $ converges to zero in $L^{2}(\Omega)$.

\end{description}
When $\lambda = \frac{1}{2}$ and $a=\frac{\nu}{2}$  we retrieve the results in \cite{NoPe4}, Theorem 1.2.
\end{remark}


\begin{thebibliography}{99}


\bibitem{APP}
{E. Azmoodeh, G. Peccati and G. Poly (2014): } \emph{ Convergence towards linear combinations of chi-squared random variables. } Preprint. 

\bibitem{BSS}
{B.M. Bibby, I.M. Skovgaard and M. Sorensen (2005): } \emph{Diffusion-type models with given marginals and auto-correlation function}, Bernoulli 11(2), 191-220.


\bibitem{BoTu}
{S. Bourguin and C.A. Tudor (2011): } {\em Cram\'er theorem for gamma random variables. } Electron. Commun. Probab. 16, 365-378.

\bibitem{EV}
{R. Eden and F. Viens (2013): } \emph{ General upper and lower tail estimates using Malliavin calculus and Stein's equations}, Progress in Probability, 67, 55-84.

\bibitem{KuTu} {Sei. Kusuoka and C.A. Tudor (2012): } \emph{ Stein's method for invariant measures of diffusions via Malliavin calculus}, Stochastic Processes and their Applications 122, 1627-1651.


\bibitem{NPbook}
{I. Nourdin and G. Peccati (2012): } Normal Approximations with Malliavin Calculus
From Stein \index{Stein}'s Method to Universality. Cambridge University Press.

\bibitem{NoPe1}
{I. Nourdin and G. Peccati (2007): } \emph{Stein's method on Wiener chaos}, Probability Theory and Related Fields 145, 75-118.


\bibitem{NoPo}
{I. Nourdin and G. Poly (2012): }, \emph{Convergence in total variation on Wiener chaos}, Stochastic Processes and their Applications, 123(2), 651-674.

\bibitem{NoPe4}
{I. Nourdin and G. Peccati (2009): } \emph{Non-central convergence of multiple integrals}, Ann. Probab. 37(4)1412-1426.

\bibitem{NPS}
{I. Nourdin, G. Peccati and Y.Swan (2014): } Entropy and the fourth moment phenomenon. J. Funct. Anal. 266 (5) 3170-3207.

\bibitem{NV}
{I. Nourdin and F. Viens (2009): }{\em Density formula and concentration inequalities with Malliavin calculus. } Electronic Journal of Probability, 14, paper 78,  2287-2309.

\bibitem{N} {D. Nualart}, Malliavin Calculus and Related Topics, second edition, {Springer}, 2006.

\bibitem{NuOr}
{D. Nualart and S. Ortiz-Latorre (2008)}, \emph{Central limit theorems for multiple stochastic integrals and Malliavin calculus.}
 Stochastic Process. Appl. 118 (4), 614–628. 

\bibitem{NuPe}
{D. Nualart and G. Peccati (2005): }, \emph{Central limit theorems for sequences of multiple stochastic integrals},
 Ann. Probab. 33, 177–193.


\bibitem {PeTu}{G. Peccati and C.A. Tudor (2004): } {\em Gaussian limits for
vector-valued multiple stochastic integrals. }S\'{e}minaire de
Probabilit\'{e}s, XXXIV, 247-262.

\bibitem{Sh} {I. Shigekawa (2004): }, Stochastic analysis, Translations of Mathematical Monographs 224, Iwanami Series in Modern Mathematics, American Mathematical Society, Providence, RI.


\bibitem{T1}
{C.A. Tudor (2013): } Analysis of variations for self-similar processes. 
 Springer. 

\bibitem{T3}
{C.A. Tudor (2014): } {\em Chaos expansion and asymptotic behavior of the Pareto distribution. } Statist. Probab. Lett. 91, 62-68.


\bibitem{UZ}
{A.S. Ustunel and M. Zakai (1989): } {\em On independence and conditioning on Wiener space. }  Ann. Probab. 17(4), 1441-1453.
\end{thebibliography}
\end{document}